\setlist[enumerate,1]{label={(\alph*)}}
\title[Invertible sheaves on subvarieties of the isomeric supergrassmannian]{Invertible sheaves and $\Pi$-invertible sheaves on the isomeric supergrassmannian and its toric subvarieties}
\author{Eric Jankowski}
\date{}
\newcommand{\F}{\mathscr{F}}
\newcommand{\J}{\mathscr{J}}
\newcommand{\K}{\mathscr{K}}
\renewcommand{\L}{\mathscr{L}}
\renewcommand{\O}{\mathscr{O}}
\renewcommand{\AA}{\mathbb{A}}
\newcommand{\PP}{\mathbb{P}}
\newcommand{\ZZ}{\mathbb{Z}}
\newcommand{\RR}{\mathbb{R}}
\newcommand{\CC}{\mathbb{C}}
\renewcommand{\epsilon}{\varepsilon}
\newcommand{\q}{\mathfrak{q}}
\newcommand{\uq}{\mathfrak{uq}}
\renewcommand{\t}{\mathfrak{t}}
\newcommand{\red}{\text{red}}
\newcommand{\thick}{\text{thick}}
\newcommand{\0}{{\ol{0}}}
\newcommand{\1}{{\ol{1}}}
\DeclareMathOperator{\rk}{rk}
\DeclareMathOperator{\Gr}{Gr}
\DeclareMathOperator{\QGr}{QGr}
\DeclareMathOperator{\GL}{GL}
\DeclareMathOperator{\Stab}{Stab}
\DeclareMathOperator{\Span}{Span}
\DeclareMathOperator{\cl}{cl}
\DeclareMathOperator{\Pic}{Pic}
\newtheorem{theorem}{Theorem}
\newtheorem{lemma}{Lemma}[section]
\newtheorem{proposition}[lemma]{Proposition}
\theoremstyle{definition}
\newtheorem{example}[lemma]{Example}
\newtheorem{remark}[lemma]{Remark}
\newtheorem{definition}[lemma]{Definition}
\newcommand{\ol}[1]{\overline{#1}}
\begin{document}

\begin{abstract}
We provide an elementary proof that with the exceptions of certain $\Pi$-projective spaces, both the Picard group and the $\Pi$-Picard set of the isomeric (i.e.\ type-Q) supergrassmannian are trivial. We extend this technique to show that the Picard group and the $\Pi$-Picard set of a supertorus orbit closure within the isomeric supergrassmannian can be easily calculated from its defining polytope by counting the number of simplex factors. Since the presence of nontrivial invertible sheaves and $\Pi$-invertible sheaves depends entirely on factors of $\Pi$-projective space, we construct them as symmetric powers of the tautological sheaf and its dual.
\end{abstract}

\subjclass[2020]{14C22, 14L30, 14M15, 14M25, 14M30}
\keywords{supergrassmannians, picard groups, invertible sheaves, toric varieties}

\maketitle

\section{Introduction and some reference ideas}

The isomeric supergrassmannian, $\QGr(r,n)$, is a homogeneous space for the supergroup $Q(n)$, isomorphic to its quotient by a maximal parabolic subgroup. By considering the action of a Cartan subgroup $Q(1)^n \subseteq Q(n)$ on $\QGr(r,n)$, one finds that its orbit closures are normal, making them toric supervarieties as defined by the author in \cite{Jankowski2}. These toric supervarieties admit a description by subobjects of the superpolytope which occurs as the image of an ``odd momentum map" on $\QGr(r,n)$.

In this paper we study two different kinds of locally free sheaves on $\QGr(r,n)$ and its toric subvarieties: invertible sheaves and $\Pi$-invertible sheaves. An invertible sheaf is merely a $(1|0)$-dimensional vector bundle; as usual, these assemble into the Picard group $\Pic(X)$. A $\Pi$-invertible sheaf is a $(1|1)$-dimensional vector bundle equipped with an odd involution; these assemble into the $\Pi$-Picard set $\Pic_\Pi(X)$, which is not a group but a pointed set with distinguished point $\O_X \oplus \Pi \O_X$. While invertible sheaves characterize morphisms to projective superspace $\PP^{m|n}$, $\Pi$-invertible sheaves characterize morphisms to $\Pi$-projective superspace $\PP_\Pi^n \cong \QGr(1,n+1)$. Such embeddings have been studied by various authors, in e.g. \cite{LPW}, \cite{Kwok}, and \cite{PolishchukGrassmannian}.

The study of $\Pi$-invertible sheaves originates with Manin and Skornyakov in \cite{ManinGFTCG}, where it is expounded that $\Pic_\Pi(X) \cong H^1(X,\O_X^\times)$. In many ways this isomorphism makes the $\Pi$-Picard set a good analogue of the classical Picard group of a variety, but in other ways it is not so nice. For instance, although the tensor product of two $\Pi$-invertible sheaves decomposes into a sum of two rank-$(1|1)$ locally free sheaves (see e.g.\ \cite{MVP} or \cite{ManinTNCG}), the summands cannot in general be equipped with odd involutions to make them $\Pi$-invertible. 

Nonetheless, the calculation of $\Pic_\Pi(X)$ for various supervarieties $X$ remains an interesting problem. Cacciatori and Noja calculate it for projective superspace $\PP^{m|n}$ in \cite[Corollaries 1 and 2]{CN}. Manin and Skornyakov do so for $\Pi$-projective superspace $\PP_\Pi^n$ in \cite[Chapter 2, Section 8.11.f]{ManinTNCG}. Manin, Voronov, and Penkov generalize this to $\QGr(r,n)$ in \cite[p.\ 2078]{MVP}, although the description is somewhat opaque and does not clearly indicate when the result is 0.

There have also been some efforts to find the Picard groups of various supervarieties. The result for the type-A supergrassmannian is due to Penkov and Skornyakov in \cite{PenkSkorn}, while Cacciatori and Noja treat the exceptional case of $\PP^{1|n}$ in \cite{CN}. The computation of $\Pic(\QGr(r,n))$, which will be done in this paper, is known to experts and often attributed to Penkov \cite{Penkov}. However, neither he nor the author could locate a written copy of his proof.

The Picard superscheme captures some subtleties of superschemes over a base; Bruzzo, Hernández Ruipérez, and Polishcuk calculate in \cite{BRP} that the Picard superscheme of $\PP_\Pi^n$ is $\AA^{0|1}$, in contrast to the Picard group $\Pic(\PP_\Pi^n)=0$. While Picard superschemes are not the focus of this paper, determining them for the examples at hand would constitute an interesting, if rather straightforward, future direction.

On the classical side, Picard groups of toric varieties are well-studied and combinatorial. If $X$ is a toric variety whose fan $\Sigma$ contains at least one cone of maximal dimension, then $\Pic(X) \cong \ZZ^{r-n-\lambda}$ where $r$ is the number of rays in $\Sigma$, $n$ is its dimension, and $\lambda$ is the dimension of the space of linear dependencies of rays of maximal cones (see e.g.\ \cite[Theorem VII.2.16]{Ewald}).

Our contribution to this ongoing story is the following result, which is the amalgamation of Propositions \ref{prop:PicQGr}, \ref{prop:PicTSV}, \ref{prop:PiPicQGr}, and \ref{prop:PiPicTSV}.

\begin{theorem}
    Let $X$ be $\QGr(r,n)$ or the closure of a $Q(1)^n$ orbit inside $\QGr(r,n)$. Then
    \begin{align*}
        \Pic(X) &\cong \prod_{\text{factors } \PP_\Pi^{1} \text{of } X} \ZZ \quad\quad\quad\text{and} \\
        \Pic_\Pi(X) &\cong \left\{ (\ell_i, c_i) \in  \prod_{\text{factors } \PP_\Pi^{d_i} \text{ of } X} \ZZ \oplus \CC \; \Bigg| \; \text{at most one } c_i \neq 0, \text{and if } d_i>1, \text{then } c_i^2=\ell_i \right\}
    \end{align*}
\end{theorem}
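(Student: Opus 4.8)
The plan is to prove the four cited propositions separately and then amalgamate them; the unifying principle is that both $\Pic$ and $\Pic_\Pi$ are controlled by the interaction between the reduced variety $X_\red$ and the nilpotent (odd) directions, and that only the $\PP_\Pi$ factors survive this interaction. I would first record the structural input: for $X=\QGr(r,n)$ the reduced variety is the ordinary Grassmannian $\Gr(r,n)$, and the type-Q condition forces the odd conormal bundle to be identified with a twist of $\Omega^1_{\Gr(r,n)}$; for a $Q(1)^n$-orbit closure, $X_\red$ is the associated normal toric variety, and the combinatorial structure of its defining polytope --- in particular its decomposition into simplex factors --- dictates everything. The $\PP_\Pi^{d_i}$ factors are exactly the simplex factors of the polytope, so that the products appearing in the theorem are genuinely products over these factors.

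For the Picard group I would use the short exact sequence of sheaves of abelian groups
\[
1 \longrightarrow 1+\J \longrightarrow \O_X^\times \longrightarrow \O_{X_\red}^\times \longrightarrow 1,
\]
where $\J$ is the ideal sheaf generated by odd functions, and take cohomology to obtain
\[
H^1(X,1+\J) \longrightarrow \Pic(X) \longrightarrow \Pic(X_\red) \xrightarrow{\ \delta\ } H^2(X,1+\J).
\]
Working over $\CC$, the sheaf $1+\J$ is filtered $\J$-adically with graded pieces the exterior powers $\wedge^k N^\vee$ of the odd conormal bundle, and via the logarithm its cohomology is computed from that of these homogeneous bundles by Borel--Weil--Bott on $\Gr(r,n)$, or combinatorially on the toric $X_\red$. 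The crux is a vanishing/nonvanishing dichotomy: in the generic case $H^1(X,1+\J)=0$ while $\delta$ sends the ample generator of $\Pic(X_\red)$ to a nonzero obstruction, so the determinant bundle fails to lift and $\Pic(X)=0$; the obstruction degenerates precisely on a $\PP_\Pi^1$ factor, where $X_\red$ is a $\PP^1$ and $H^2$ vanishes for dimension reasons, each such factor then contributing one copy of $\ZZ$.

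For the $\Pi$-Picard set I would run the analogous argument with the sheaf of $\Pi$-automorphisms $\O_X^{\times,\Pi}$, whose even part recovers $\O_X^\times$ and whose odd part $\O_X^{\mathrm{odd}}$ supplies the continuous parameter $c\in\CC$. Computing $H^1(X,\O_X^{\times,\Pi})$ on each $\PP_\Pi^{d_i}$ via the same filtration yields the integer $\ell_i$ from the even (degree) part and the complex number $c_i$ from the odd part; the quadratic relation $c_i^2=\ell_i$ for $d_i>1$ emerges from imposing the involution condition $\pi^2=\mathrm{id}$, which couples the odd twist $c_i$ to the degree $\ell_i$ through the $\ell_i$-fold tensor structure, whereas on a $\PP_\Pi^1$ the low dimension removes this coupling and leaves $c_i$ free. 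The global constraint that at most one $c_i\neq 0$ is forced by the incompatibility of odd involutions under external tensor product across factors, exactly the phenomenon recalled in the introduction.

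The main obstacle I expect is the $\Pi$-Picard computation on a single $\PP_\Pi^{d}$: one must track the odd involution through an explicit \v{C}ech cocycle calculation to see how $c$ arises, why $\pi^2=\mathrm{id}$ produces the quadratic relation $c^2=\ell$ for $d>1$, and why that relation disappears when $d=1$. Establishing this cleanly --- together with verifying that the odd twists on distinct factors genuinely obstruct one another, yielding the ``at most one $c_i\neq 0$'' condition --- is the heart of the argument; once it is in place, the reduction to simplex factors and the Borel--Weil--Bott vanishing are comparatively routine, and amalgamating the four propositions is formal.
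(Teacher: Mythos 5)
There is a genuine gap, and it sits exactly where the theorem's content lies: in the vanishing cases. Your proposal computes the contributions of the $\PP_\Pi^{d_i}$ factors, but the heart of the theorem is that everything else is trivial, and this you assert rather than prove. For $\Pic$, the claim that $\delta$ sends the ample generator of $\Pic(X_\red)$ to a nonzero obstruction does not follow from Borel--Weil--Bott or Hodge-type vanishing: those arguments identify the groups in which the obstruction lives (and give injectivity of $\Pic(X) \to \Pic(X_\red)$, which is how the paper uses them), but they say nothing about whether the specific obstruction class is nonzero; computing it requires knowing the extension class of the thickening of $\Gr(r,n)$ inside $\QGr(r,n)$. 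The paper instead pulls back along standard extensions $\PP_\Pi^2 \to \QGr(r,n)$ (compatible with $\PP^2 \to \Gr(r,n)$) --- a reduction your proposal never mentions --- and then shows $\Pic(\PP_\Pi^2)=0$ by an explicit \v{C}ech computation on its three big charts. The gap is worse for $\Pic_\Pi$ in the middle range $1<r<n-1$, e.g.\ $\QGr(2,4)$: there are no $\Pi$-projective factors there, so your framework produces no constraint at all. The paper's mechanism is an orientation clash: two embeddings $\PP_\Pi^2 \to \QGr(2,4)$ whose momentum images are adjacent triangles in $\Delta_{2,4}$ induce the constraints $c^2=\ell$ and $c^2=-\ell$ simultaneously, forcing $\ell=c=0$; the toric analogue of this (two triangular faces meeting at an edge, not contained in a larger simplicial face) is precisely what proves that a polytope with no simplex factor has trivial $\Pi$-Picard set. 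Nothing in your proposal substitutes for these arguments.

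Separately, you misattribute the two key relations. The relation $c^2=\ell$ on $\PP_\Pi^n$, $n>1$, does not come from the involution condition $\pi^2=\mathrm{id}$: the involution can always be rescaled, and indeed $(\ell,+\sqrt{\ell})$ and $(\ell,-\sqrt{\ell})$ give isomorphic sheaves whose involutions differ by a sign. It is a lifting condition from $X/\J^2$ to $X$: the transition data $f_if_0^{-1}=t_i^\ell(1+c\xi_i)$ forces $f_if_j^{-1}=(t_it_j^{-1})^\ell(1+c(\xi_i-\xi_j)-c^2\xi_i\xi_j)$, and this is a regular unit on $U_i\cap U_j$ only when the $\xi_i\xi_j$-coefficient matches that of $(t_it_j^{-1}(1-\xi_i\xi_j))^\ell$, i.e.\ $c^2=\ell$; on $\PP_\Pi^1$ the constraint disappears because $\J^2=0$ (one odd coordinate), not because of any decoupling of the involution. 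Likewise, ``at most one $c_i\neq 0$'' is proved by a cocycle computation, not by the tensor-product phenomenon from the introduction: transition functions $f_1,f_2$ along edges of two distinct simplex factors satisfy $f_1f_2f_1^{-1}f_2^{-1}=1$ around a square face, so they commute, and since odd elements anticommute this forces $c_1c_2=0$. You flag the \v{C}ech computation as the main obstacle, which is correct, but the proposal pre-commits to mechanisms that the computation would in fact contradict, so as written the argument cannot be completed along the lines you describe.
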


The number of factors of $\Pi$-projective space $\PP_\Pi^{d} \cong \QGr(1, d+1)$ can be detected easily by counting the number of $d$-simplex factors in the defining polytope. We emphasize that for $X=\QGr(r,n)$, we have
\begin{align*}
    \Pic(\QGr(r,n)) &= \begin{cases}
        \ZZ & r=1, n=2 \\
        0 & \text{otherwise}
    \end{cases} \\
    \Pic_\Pi(\QGr(r,n)) &= \begin{cases}
        \ZZ \oplus \CC & r=1, n=2 \\
        \{(\ell, c) \in \ZZ \oplus \CC \mid c^2=\ell\} &r\in \{1, n-1\}, n>2 \\
        0 & \text{otherwise}
    \end{cases}
\end{align*}
so in particular $\Pic(\QGr(r,n)) = \Pic_\Pi(\QGr(r,n)) = 0$ unless $\QGr(r,n) \cong \PP_\Pi^{n-1}$.

The setting $\ZZ \oplus \CC$ of $\Pic(X)$ (for $X=\PP_\Pi^n$) arises as the Picard group of the (non-super) scheme with the same topological space and the structure sheaf $\O_X/\J_X^2$, where $\J_X$ is the odd ideal. Note that the $\ZZ/2\ZZ$-grading must be forgotten in order to make it into a non-super scheme, though it is equivalent to consider the $\Pi$-Picard set of the corresponding superscheme. Equivalently, $\ZZ$ is the Picard group of the underlying variety (given by $\O_X/\J_X$), and $\CC \cong H^1(X,\Omega^1)$.

The tautological sheaves $\O_\Pi(-1)$ and $\O_\Pi(1)$ of $\PP_\Pi^n$ have been studied in e.g.\ \cite{ManinTNCG}, \cite{PolishchukGrassmannian}, and \cite{Kwok}. We generalize their construction to the $\Pi$-invertible sheaves $\O_\Pi(\ell)$. Since the choice of square root $c = \pm \sqrt{\ell}$ determines only the odd involution of the $\Pi$-invertible sheaf, these $\O_\Pi(\ell)$ form a complete set of representatives of $\Pic_\Pi(\PP_\Pi^n)$ for $n>1$. The following theorem summarizes the observations of Remarks \ref{rem:PicTSV}, \ref{rem:PiPicQGr}, and \ref{rem:PiPicTSV}.

\begin{theorem}
    Let $X$ be the closure of a $Q(1)^n$ orbit inside $\QGr(r,n)$. Then:
    \begin{enumerate}
        \item An invertible sheaf on $X$ is given by the tensor product of an invertible sheaf $\O(\ell), \ell \in \ZZ$ on each factor of $\PP_\Pi^1$ in $X$.
        \item A $\Pi$-invertible sheaf on $X$ is given by the tensor product of such an invertible sheaf with either:
        \begin{enumerate}[label=(\roman*)]
            \item A $\Pi$-invertible sheaf $\O_\Pi(\ell)$, $\ell \in \ZZ$ on some factor $\PP_\Pi^d$ for $d>1$, or
            \item A $\Pi$-invertible sheaf arising from $H^1(\PP^1, \Omega^1) \cong \CC$ on some factor of $\PP_\Pi^1$.
        \end{enumerate}
    \end{enumerate}
\end{theorem}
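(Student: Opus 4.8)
The plan is to assemble the explicit observations of Remarks \ref{rem:PicTSV}, \ref{rem:PiPicQGr}, and \ref{rem:PiPicTSV} and to match them against the abstract parametrizations of $\Pic(X)$ and $\Pic_\Pi(X)$ already recorded in the first theorem (Propositions \ref{prop:PicTSV} and \ref{prop:PiPicTSV}). The starting point is the product decomposition $X \cong \prod_i \PP_\Pi^{d_i}$, with one factor $\PP_\Pi^{d_i}$ for each $d_i$-simplex in the defining polytope; I would first use this to reduce every construction to a single factor. For the genuine Picard group this reduction is a clean K\"unneth factorization because $\Pic$ is a group, whereas for the $\Pi$-Picard set it yields only a factorwise parametrization whose global constraints are exactly those of the first theorem.

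For part (1) I would proceed factor by factor. Since $\Pic(\PP_\Pi^{d})$ equals $\ZZ$ for $d=1$ and $0$ for $d>1$, only the $\PP_\Pi^1$ factors contribute, and on each of them $\Pic(\PP_\Pi^1) \cong \Pic(\PP^1) \cong \ZZ$ is generated by an explicit $\O(\ell)$ built from symmetric powers of the tautological sheaf as in the introduction. I would then form the external tensor product of these $\O(\ell_i)$ over the $\PP_\Pi^1$ factors; that these exhaust $\Pic(X)$ is the abstract isomorphism, so all that remains is to check invertibility and to recover the degree $\ell_i$ by restriction to a single factor.

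For part (2) the work is to exhibit the odd involution. Beginning with an invertible sheaf $L$ as in part (1), the trivial $\Pi$-structure is $L \oplus \Pi L$, and I would twist by the contribution of a single ``activated'' factor. If that factor is a $\PP_\Pi^{d}$ with $d>1$, I would take $\O_\Pi(\ell)$ to be a symmetric power of the tautological $\Pi$-invertible sheaf $\O_\Pi(1)$ (using its dual $\O_\Pi(-1)$ for $\ell<0$); its odd involution is normalized by a parameter $c$ forced to satisfy $c^2=\ell$, matching the constraint of the first theorem. If the activated factor is instead a $\PP_\Pi^1$, the degree and the odd data decouple, and the deformation is a free class in $H^1(\PP^1,\Omega^1)\cong\CC$ --- the odd part of $\Pic_\Pi \cong H^1(X,\O_X^\times)$, equivalently the $\CC$ appearing in the Picard group of $\O_X/\J_X^2$ --- realized by modifying the involution through a nontrivial extension.

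The hard part will be reconciling these explicit sheaves with the clause that at most one $c_i$ is nonzero. The underlying obstruction is the failure of $\Pi$-invertible sheaves to be closed under tensor product, recalled in the introduction: activating two factors simultaneously produces a rank-$(2|2)$ sheaf that splits into rank-$(1|1)$ summands admitting no compatible odd involution. I would make this precise by tracking the odd involution through the factorwise box tensor product and showing that requiring an honest rank-$(1|1)$ $\Pi$-structure forces all but one $c_i$ to vanish. This both confirms that the listed constructions are well defined and shows they biject with the allowed tuples of the first theorem, completing the identification.
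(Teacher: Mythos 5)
Your first step already assumes away the main difficulty. You take as given a decomposition $X \cong \prod_i \PP_\Pi^{d_i}$ with one $\Pi$-projective factor per simplex of the polytope, but an orbit closure in $\QGr(r,n)$ is not, in general, a product of $\Pi$-projective spaces: its momentum polytope $P$ need not be a product of simplices. For instance, the closure of a generic $Q(1)^4$-orbit in $\QGr(2,4)$ has polytope the octahedron $\Delta_{2,4}$, which has no simplex factors at all, yet the orbit closure is far from a point; the correct general form is $X \cong \prod_i (\PP_\Pi^{i})^{d_i} \times Y$ with the $d_i$ maximal and $Y$ a leftover toric supervariety. The substance of the theorem is precisely that $Y$ contributes nothing. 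For $\Pic$ this is Proposition \ref{prop:PicTSV} together with Remark \ref{rem:PicTSV}: one computes $\Pic(X) \cong \ZZ^{pc(P)}$ by a cocycle analysis of transition functions along the 1-skeleton of $P$, and then identifies each parallel component with a $\PP_\Pi^1$ factor. For $\Pic_\Pi$ it is the chain of Lemmas \ref{lemma:FaceIsTriangleOrSquare}, \ref{lemma:PiPic=0IfTwoTrianglesMeet}, and \ref{lemma:PiPic=0IfNoSimplexFactor}: every 2-face of $P$ is a triangle or a square; a $\Pi$-invertible sheaf trivializes over any two triangular faces meeting along an edge that do not lie in a larger simplicial face; hence a class that fails to trivialize over a maximal simplicial face $S$ forces $P$ to split as $S$ times another polytope. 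Only after these statements is any reduction to single factors, K\"unneth-type or otherwise, available; your proposal never proves them, so the factorwise parametrization has nothing to act on. (Note also that even granting a product decomposition, $\Pic(X \times Y) \cong \Pic(X) \oplus \Pic(Y)$ is not a formal consequence of $\Pic$ being a group --- it fails already for products of elliptic curves --- and in this paper it, too, comes out of the cocycle analysis.)

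The second gap is the clause that at most one $c_i \neq 0$. Your proposed mechanism --- the failure of tensor products of $\Pi$-invertible sheaves to be $\Pi$-invertible --- is not what is at play: the theorem concerns a single rank-$(1|1)$ sheaf on $X$, so no rank-$(2|2)$ box product ever arises whose splitting would need to be obstructed. The paper's proof (Proposition \ref{prop:PiPicTSV}) is a short cocycle computation: if $f_1, f_2$ are transition functions along edges of two distinct simplex factors, the cocycle condition around the square 2-face spanned by those edges gives $f_1 f_2 f_1^{-1} f_2^{-1} = 1$; since odd elements anticommute, two commuting units of the form $t^{\ell_i}(1 + c_i \xi_i + \cdots)$ can have at most one nonzero odd part, so $c_1 = 0$ or $c_2 = 0$. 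This is the concrete statement that your ``tracking the odd involution through the box product'' would have to become. Finally, a smaller correction to part (a): on a $\PP_\Pi^1$ factor the invertible sheaves $\O(\ell)$ are \emph{not} symmetric powers of the tautological sheaf; those symmetric powers are the rank-$(1|1)$ $\Pi$-invertible sheaves $\O_\Pi(\ell)$ of Remark \ref{rem:PiPicQGr}, whereas the $\O(\ell)$ are rank-$(1|0)$ and come from $\Pic(\PP^1)$ via $\O_{\PP_\Pi^1, \0} = \O_{\PP^1}$ (Lemma \ref{lemma:PicPPi1=Z}).
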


\subsection{Structure of Paper} We establish some notation and preliminaries in section \ref{sec:Notation&Prelims} before calculating Picard groups in section \ref{sec:Pic} and $\Pi$-Picard sets in section \ref{sec:PiPic}.

\subsection{Acknowledgments}

The author is grateful to Vera Serganova and Ivan Penkov for many helpful discussions. This material is based upon work supported by the National Science Foundation Graduate Research Fellowship Program under Grant No.\ 2146752.

\section{Notation and Preliminaries}\label{sec:Notation&Prelims}
We work over $\CC$.

\subsection{Toric Supervarieties}

As in \cite{Jankowski2}, we use supervariety to mean a GFRR (i.e.\ generically fermionically regular and reduced) separated superscheme of finite type over an algebraically closed field. A normal supervariety is one that in addition satisfies the $(R_1)$ and $(S_2)$ criteria of Serre (subsuming the GFRR condition), and a toric supervariety is an irreducible normal supervariety equipped with an action by a supertorus for which there is a dense open orbit.

A supertorus is a supergroup such as $Q(1)^n \subseteq Q(n)$ that occurs as a Cartan subgroup of a quasireductive supergroup; equivalently, it is a supergroup whose even part is a central torus.

For the purposes of this paper, a toric supervariety is determined by the usual data of a polyhedral fan $\Sigma$, together with a decoration $V_\rho$ on each of its rays $\rho$. The decorations are 0- or 1-dimensional subspaces of the odd part of the Lie superalgebra $\q(1)^n$, such that whenever $\rho$ and $\rho'$ are rays in the same cone $\sigma \in \Sigma$, it holds that $[V_\rho, V_{\rho'}] \subseteq \Span_\CC \sigma$. For more on supertori and toric supervarieties, see the author's prior papers \cite{Jankowski} and \cite{Jankowski2}.

If $X=(|X|, \O_X)$ is a supervariety, we write $i_X : \O_X \to \K_X$ for the natural embedding of the sheaf of regular functions into the sheaf of rational functions. Then, writing $(\mathscr{K}_{X,\1})$ for the ideal sheaf generated by odd elements in $\K_X$, there is a filtration $$\O_X \supset i_X^{-1}((\K_{X,\1})) \supset i_X^{-1}((\K_{X,\1})^2) \supset ... \supset i_X^{-1}((\K_{X,\1})^n) \supset i_X^{-1}((\K_{X,\1})^{n+1}) = 0$$
where $n$ is the odd dimension of $X$. We write
$$\J_X = i_X^{-1}((\K_{X,\1}))$$
for the ideal sheaf of the underlying variety $X_\red$ in $X$, and
$$\F_X = \J_X / i_X^{-1}((\K_{X,\1})^2)$$
for the \textit{fermionic sheaf} of $X$. The fermionic sheaf is essentially an odd cotangent sheaf that smooths out pill-type singularities (defined in \cite[Example 3.4]{Jankowski2}). For the purposes of $\QGr(r,n)$ and its subvarieties of concern, we have $i_X^{-1}((\K_{X,\1})^j) = \J_X^j$, and so $\F_X = \J_X/\J_X^2$.

\subsection{The Isomeric Supergrassmannian}

We write $\QGr(r,n)$ for the isomeric supergrassmannian. As mentioned above, it is a homogeneous space for $Q(n)$, isomorphic to $Q(n)/P_{Q(n)}$ for a maximal parabolic $P_{Q(n)} \subseteq Q(n)$.

Another important viewpoint on $\QGr(r,n)$ is as a super moduli space of $(r|r)$-dimensional $\Pi_V$-invariant subspaces of an $(n|n)$-dimensional isomeric super vector space $(V, \Pi_V)$, per the following definition.
\begin{definition}
    An \textit{isomeric super vector space} is a pair $(V, \Pi_V)$ consisting of a super vector space $V = V_\0 \oplus V_\1$ equipped with an odd involution $\Pi_V : V \to V$.
\end{definition}
That is, if $A$ is a commutative superalgebra, the $A$-points of $\QGr(r,n)$ parameterize free, rank-$(r|r)$, $\Pi_{A \otimes V}$-invariant submodules of $A \otimes V$. In coordinates, this means $\QGr(r,n)(A)$ is the space of full-rank matrices of the form
\begin{align*}
M = \begin{pmatrix}
    a_{11} + \eta_{11} & \cdots & a_{1r}+\eta_{1r} \\
    \vdots & \ddots & \vdots \\
    a_{n1} + \eta_{n1} & \cdots & a_{nr} + \eta_{nr}
\end{pmatrix}
\end{align*}
for $a_{ij} \in A_\0$ and $\eta_{ij} \in A_\1$, modulo the right action of $Q(r)(A)$. As seen in \cite{Noja}, it is covered by $\binom{n}{r}$ affine charts (one for each size-$(n-r)$ subset $I$ of $\{1,...,n\}$), consisting of matrices of the form
\begin{align*}
M' = \begin{pmatrix}
    a_I + \eta_I \\
    1_{r \times r}
\end{pmatrix}
\end{align*}
where $a_I + \eta_I$ is the submatrix of $M$ consisting of the rows corresponding the indices in $I$, and where the rows of $M'$ have been rearranged accordingly for notational convenience.

\subsection{Supertorus Orbits}
Using the above coordinates on $\QGr(r,n)$, the left action of $Q(1)^n$ is via matrix multiplication, where $Q(1)^n(A)$ consists of diagonal matrices

\begin{align*}
    \begin{pmatrix}
        t_1(1+\xi_1) && \\
        & \ddots & \\
        && t_n(1+\xi_n)
    \end{pmatrix}
\end{align*}
for $t_i \in A_\0^\times$ and $\xi_i \in A_\1$. As a result, the affine charts of a supertorus orbit closure will have coordinates of the forms $t_it_j^{-1}(1-\xi_i\xi_j)$ and $t_i t_j^{-1}(\xi_i-\xi_j)$. Sometimes we will act by a subsupertorus $Q(1)^d \subseteq Q(1)^n$, in which case some coordinates will have the forms $t_i$ and $t_i \xi_i$.

The Lie superalgebra of $Q(1)^n$ can be written as $\q(1)^n = \Span_\CC\{x_1, ..., x_n, \theta_1, ..., \theta_n\}$ such that $[\theta_i, \theta_j] = \delta_{ij}x_i$.

\subsection{Polytopes and the Momentum Map}

The classical type-A Grassmannian $\Gr(r,n)$ admits a momentum map to $\RR^n$ whose image is the $(r,n)$-hypersimplex $\Delta_{r,n}$, the convex hull of the vectors in $\{0,1\}^n$ with exactly $r$ instances of 1. Here, the target space $\RR^n$ is really the dual space of $t_\RR$, a real form of the Lie algebra of a maximal torus $T \subseteq \GL(n) \to \Gr(r,n)$. 

Remarkably, the momentum map image of a torus orbit closure (i.e.\ a toric variety) in $\Gr(r,n)$ is a subpolytope of $\Delta_{r,n}$ whose normal fan is the fan of the toric variety (see e.g.\ \cite{GGMS}). The edges and vertices of such a subpolytope must be a subset of those of $\Delta_{r,n}$. A similar phenomenon occurs in the super case for the action of $Q(1)^n$ on $\QGr(r,n)$ (see \cite{Jankowski2}), but this will not be needed in the present paper.

\subsection{Standard Extensions}
In this section we introduce an embedding of Grassmannians (see e.g.\ \cite{PenkTikh}) that will be a useful tool in Lemmas \ref{lemma:PicQGr=0} and \ref{lemma:PiPicQGr=0}.
\begin{definition}
    A \textit{standard extension} of Grassmannians is an embedding of the form
    \begin{align*}
        \Gr(r,n) &\to \Gr(r',n') \\
        \{V \subseteq \CC^n\} &\mapsto \{V \oplus W \subseteq \CC^{n} \oplus \CC^{n'-n}\}
    \end{align*}
    where $W$ is a fixed $(r'-r)$-dimensional subspace of $\CC^{n'-n}$.
\end{definition}

Standard extensions generalize easily to the super setting. In the type-Q case, the map is given by taking the direct sum with a fixed isomeric $(r'-r|r'-r)$-dimensional subspace $W$ of $\CC^{n'-n|n'-n}$. Functorially, the map on $A$-points can be written as
\begin{align*}
    M \mapsto \begin{pmatrix}
        M & 0_{n \times (r' -r)} \\
        0_{(r'-r) \times r} & 1_{(r'-r) \times (r'-r)} \\
        0_{(n' - n - r'+r) \times r} & 0_{(n' - n - r'+r) \times (r'-r)}
    \end{pmatrix}.
\end{align*}

\section{Picard Groups}\label{sec:Pic}

\subsection{Picard Group of the Isomeric Supergrassmannian}

Let $X = \QGr(r,n)$ for some integers $n \geq r \geq 0$.

\begin{lemma}\label{lemma:PicPPi1=Z}
    If $r=1$ and $n=2$, so that $X=\PP_\Pi^1$, then $\Pic(X) = \ZZ$.
\end{lemma}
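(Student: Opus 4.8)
The plan is to compute $\Pic(X)$ directly from a two-chart atlas via Čech cohomology of the sheaf of even units $\O_X^\times$. By the coordinate description of $\QGr(r,n)$ with $r=1$, $n=2$, the space $X=\PP_\Pi^1$ is covered by two affine charts: $U_1$, where the second row is normalized to $1$, and $U_2$, where the first row is normalized to $1$, each isomorphic to $\AA^{1|1}$. I write $(a,\eta)$ for the even/odd coordinates on $U_1$ and $(b,\zeta)$ on $U_2$.

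First I would compute the transition function on $U_1 \cap U_2$. Writing the generating even vector $v = a e_1 + \eta f_1 + e_2$ of the $\Pi$-invariant subspace and rescaling by the right $Q(1)$-action $v \mapsto \lambda v + \mu \Pi v$ to renormalize the first row, one solves the resulting linear system; because $\eta^2=0$ this collapses to $\lambda = a^{-1}$ and $\mu = -a^{-2}\eta$, giving (up to sign conventions)
$$b = a^{-1}, \qquad \zeta = -a^{-2}\eta.$$
In particular $U_1 \cap U_2$ is the locus $\{a \neq 0\}$, with ring of functions $\CC[a,a^{-1},\eta]$ where $\eta^2=0$.

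Next I would reduce to Čech cohomology. Each chart is an affine superspace $\AA^{1|1}$, which has trivial Picard group (line bundles on the reduced scheme $\AA^1$ are trivial and the odd ideal is nilpotent), so every invertible sheaf trivializes on each $U_i$ and is described by a single transition cocycle. Hence
$$\Pic(X) = \check{H}^1\big(\{U_1,U_2\},\, \O_X^\times\big) = \coker\big(\O_X^\times(U_1) \times \O_X^\times(U_2) \to \O_X^\times(U_1 \cap U_2)\big).$$
The crux is then identifying the \emph{even} units. With only one odd generator, which squares to zero, an even element involves no odd variable, so the even subrings are $\CC[a]$ on $U_1$, $\CC[b]$ on $U_2$, and $\CC[a,a^{-1}]$ on the overlap; these are reduced, so their units are $\CC^\times$, $\CC^\times$, and $\{c a^k : c \in \CC^\times,\, k \in \ZZ\}$ respectively.

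Finally, both restriction maps land in the constants $\CC^\times \subseteq \O_X^\times(U_1 \cap U_2)$, so the cokernel is $\{c a^k\}/\CC^\times \cong \ZZ$, detected by the exponent $k$; thus $\Pic(X) \cong \ZZ$, generated by the cocycle $a$. The main obstacle is the bookkeeping around even units in the super setting: one must confirm that the odd coordinate contributes no new even invertible functions—so that the overlap units are exactly the even Laurent units $c a^k$—and that each chart has trivial Picard group, so that the two-chart cover genuinely computes $H^1(X,\O_X^\times)$.
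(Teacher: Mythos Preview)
Your proof is correct and rests on the same key observation as the paper: since $\PP_\Pi^1$ has odd dimension $1$, the even part of the structure sheaf coincides with $\O_{\PP^1}$, so $\Pic(\PP_\Pi^1)=\Pic(\PP^1)=\ZZ$. The paper states this globally in one line and invokes the known value of $\Pic(\PP^1)$, whereas you unpack the same fact chart by chart and carry out the \v{C}ech computation explicitly; the arguments are essentially the same.
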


\begin{proof}
    Since the odd dimension of $\PP_\Pi^1$ is 1, we have $\O_{\PP_\Pi^1, \0} = \O_{\PP^1}$ and hence $\Pic(\PP_\Pi^1) = \Pic(\PP^1) = \ZZ$.
\end{proof}

Recall that $X_\red \cong \Gr(r,n)$ is the underlying variety of $X \cong \QGr(r,n)$, given by quotienting the structure sheaf by the ideal $\J_X = i_X^{-1}((\K_{X,\1}))$.

\begin{lemma}\label{lemma:RestrictionMapInjectiveQGr}
    The restriction map $\Pic(X) \to \Pic(X_\red)$ is injective.
\end{lemma}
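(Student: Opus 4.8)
The plan is to use the standard dévissage of the unit sheaf along the nilpotent odd ideal and thereby reduce the injectivity to a cohomology vanishing statement on $\Gr(r,n)$. Writing $\O_X^\times$ for the sheaf of (necessarily even) units, so that $\Pic(X) = H^1(X, \O_X^\times)$, reduction modulo $\J_X$ gives a short exact sequence of sheaves of abelian groups
$$ 1 \to 1 + \J_{X,\0} \to \O_X^\times \to \O_{X_\red}^\times \to 1, $$
where $1 + \J_{X,\0}$ is the sheaf of units congruent to $1$ modulo the even part of $\J_X$ (these are automatically invertible since $\J_X$ is nilpotent). Passing to the long exact cohomology sequence, the kernel of the restriction map $\Pic(X) \to \Pic(X_\red)$ is exactly the image of $H^1(X, 1+\J_{X,\0})$, so it suffices to show $H^1(X, 1 + \J_{X,\0}) = 0$.

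First I would filter $1 + \J_{X,\0}$ by the subsheaves $1 + (\J_X^j)_\0$. Because $\J_X$ is nilpotent this filtration is finite, and the map $1 + x \mapsto x$ identifies each successive quotient with the even part of the coherent sheaf $\J_X^j/\J_X^{j+1} \cong \bigwedge^j \F_X$ on $X_\red \cong \Gr(r,n)$, where $\F_X$ is the fermionic sheaf. Since $\F_X = \J_X/\J_X^2$ is purely odd, these even parts are nonzero only in even degrees $j = 2k$, where they equal $\bigwedge^{2k}\F_X$. Running the long exact sequences attached to the filtration by descending induction, $H^1(X, 1 + \J_{X,\0}) = 0$ will follow once I check that $H^1(\Gr(r,n), \bigwedge^{2k}\F_X) = 0$ for every $k \geq 1$.

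The remaining input, and the main obstacle, is to identify the fermionic sheaf and compute its cohomology. Using that $X = \QGr(r,n)$ is the homogeneous space $Q(n)/P$ and that the odd part of $\q(n)$ is isomorphic to its even part $\mathfrak{gl}(n)$ as a module over the Levi of $P$, I expect $\F_X \cong \Omega^1_{\Gr(r,n)}$, so that $\bigwedge^{2k}\F_X \cong \Omega^{2k}_{\Gr(r,n)}$. The vanishing $H^1(\Gr(r,n), \Omega^{2k}) = H^{2k,1}(\Gr(r,n)) = 0$ for $2k \geq 2$ then follows from the fact that $\Gr(r,n)$ has a cell decomposition into even-dimensional Schubert cells, whence all of its Hodge cohomology is concentrated in bidegrees $(p,p)$; alternatively this is a direct application of Bott's theorem to the homogeneous bundle $\Omega^{2k}$. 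Assembling the dévissage then yields $H^1(X, 1+\J_{X,\0}) = 0$ and hence the injectivity of $\Pic(X) \to \Pic(X_\red)$. The delicate point to get right is the precise identification of $\F_X$ together with the parity bookkeeping ensuring that only \emph{even} exterior powers contribute, since it is exactly the odd degree-one piece $H^1(\Gr(r,n),\F_X) \cong \CC$ that would obstruct injectivity were it even.
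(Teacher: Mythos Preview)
Your proof is correct and follows essentially the same approach as the paper: filter the unit sheaf by powers of $\J_X$, identify the graded pieces with $\Omega^{2k}_{\Gr(r,n)}$ via the isomorphism $\F_X \cong \Omega^1_{\Gr(r,n)}$ (which the paper attributes to Noja), and invoke the Hodge-diagonal vanishing $H^1(\Gr(r,n),\Omega^{2k})=0$. The only cosmetic difference is that the paper filters by even powers $\J_X^{2i}$ from the start and passes through the intermediate schemes $X_i=(|X|,\O_{X,\0}/(\J_X^{2i})_\0)$, whereas you filter $1+\J_{X,\0}$ by all powers and then observe the odd-degree pieces vanish on parity grounds; the content is identical.
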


\begin{proof}
    Consider the sequence of successive quotients
    $$\O_{X,\0} \to \O_{X,\0}/(\J_X^{2k})_\0 \to \O_{X,\0}/(\J_X^{2(k-1)})_\0 \to ... \to \O_{X,\0}/(\J_X^4)_\0 \to \O_{X,\0}/(\J_X^2)_\0$$
    where $k = \lfloor \frac{n}{2} \rfloor$, and note that $\J_X^{2(k+1)} = 0$. Write $X_i$ for the scheme $(|X|, \O_{X,\0}/(\J_X^{2i})_\0)$. For $i=1, ..., k$, we have an exact sequence of sheaves of abelian groups $$0 \to (\J_X^{2i})_\0 / (\J_X^{2(i+1)})_\0 \to \left( \O_{X,\0}/(\J_X^{2(i+1)})_\0 \right)^\times \to \left( \O_{X,\0}/(\J_X^{2i})_\0 \right)^\times \to 0$$
    in which the first nonzero map is given by $f \mapsto 1+f$. This yields the long exact sequence in cohomology
    $$... \to H^1(X, (\J_X^{2i})_\0 / (\J_X^{2(i+1)})_\0) \to \Pic(X_{i+1}) \to \Pic(X_i) \to ...$$
    where we have used the isomorphism $\Pic(X) \cong H^1(X,\O_X^\times)$.
    
    We claim that $\Pic(X_{i+1}) \to \Pic(X_i)$ is injective, for which it suffices to show that $H^1(X, (\J_X^{2i})_\0 / (\J_X^{2(i+1)})_\0) = 0$. We first observe that $(\J_X^{2i})_\0 / (\J_X^{2(i+1)})_\0 \cong \J_X^{2i} / \J_X^{2i+1}$. It is seen in \cite{Noja} that $\J_X / \J_X^2 \cong \Omega_{X_\red}^1$, and indeed $\J_X^{2i}/\J_X^{2i+1} \cong \Omega^{2i}_{X_\red}$. But the Hodge numbers $h^q(X_{\red}, \Omega_{X_{\red}}^{p})$ are nonzero only if $p=q$, so $H^1(X, (\J_X^{2i})_\0 / (\J_X^{2(i+1)})_\0) = 0$. Therefore $$\Pic(X) \to \Pic(X_k) \to ... \to \Pic(X_1) \to \Pic(X_\red)$$ is injective, and we are finished.
\end{proof}

\begin{lemma}\label{lemma:PicQGr=0}
    If $0<r<n-1$, then $\Pic(X) = 0$.
\end{lemma}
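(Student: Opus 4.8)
The plan is to combine the injectivity of Lemma~\ref{lemma:RestrictionMapInjectiveQGr} with a standard extension that reduces the entire statement to the single base case $\PP_\Pi^2 = \QGr(1,3)$. By Lemma~\ref{lemma:RestrictionMapInjectiveQGr} the restriction map $\Pic(X) \hookrightarrow \Pic(X_\red)$ is injective, and since $0<r<n$ we have $X_\red \cong \Gr(r,n)$ with $\Pic(\Gr(r,n)) = \ZZ$ generated by the Plücker bundle $\O(1)$. Thus $\Pic(X)$ is isomorphic to its image, the subgroup of $d \in \ZZ$ for which $\O(d)$ extends to a line bundle on $X$, and it suffices to show this subgroup is trivial.

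First I would set up the reduction. The hypothesis $0<r<n-1$ forces $n-r \geq 2$ (and $n \geq 3$), so there is a standard extension $\iota \colon \QGr(1,3) = \PP_\Pi^2 \hookrightarrow \QGr(r,n)$ obtained by taking the direct sum with a fixed isomeric subspace $W$ of dimension $(r-1\,|\,r-1)$. On underlying varieties this is the type-A standard extension $\iota_\red \colon \PP^2 \hookrightarrow \Gr(r,n)$, under which the tautological bundle pulls back to the tautological bundle summed with the constant bundle $W_\red$; since $\det W_\red$ is a trivial line, the determinant, hence the Plücker bundle, pulls back to the Plücker bundle, i.e.\ $\iota_\red^* \O(1) \cong \O_{\PP^2}(1)$. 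Consequently, if $\L \in \Pic(X)$ satisfies $\L_\red \cong \O(d)$, then $\iota^* \L \in \Pic(\PP_\Pi^2)$ has reduced restriction $\O_{\PP^2}(d)$. So once we know $\Pic(\PP_\Pi^2)=0$, we get $\O_{\PP^2}(d) \cong \O_{\PP^2}$, forcing $d=0$, and then injectivity forces $\L$ trivial. The lemma thus reduces to the base case $\Pic(\PP_\Pi^2)=0$.

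For the base case I would extend the argument of Lemma~\ref{lemma:RestrictionMapInjectiveQGr} by one term. Since $\PP_\Pi^2$ has odd dimension $2$, we have $\J^3 = 0$, the relevant square-zero piece is $\J^2 \cong \Omega^2_{\PP^2} \cong \O_{\PP^2}(-3)$, and the single exact sequence used there (with $i=1$) reads $0 \to \Omega^2_{\PP^2} \to \O_{X,\0}^\times \to \O_{\PP^2}^\times \to 0$. As $H^1(\PP^2, \Omega^2_{\PP^2}) = 0$, its long exact sequence identifies $\Pic(\PP_\Pi^2)$ with $\ker\bigl(\delta \colon \Pic(\PP^2) \to H^2(\PP^2, \Omega^2_{\PP^2})\bigr)$, where $H^2(\PP^2,\Omega^2_{\PP^2}) \cong \CC$. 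Since $\delta$ is a group homomorphism and $\Pic(\PP^2)=\ZZ$, we have $\delta(\O(d)) = d\,\delta(\O(1))$, so it is enough to prove $\delta(\O(1)) \neq 0$.

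The main obstacle is exactly this nonvanishing. I would compute $\delta(\O(1))$ directly: lift the Čech $1$-cocycle $\{z_i/z_j\}$ representing $\O(1)$ to even invertible functions on the three affine charts of $\PP_\Pi^2$ using the super coordinates, and check that the resulting $\O_{\PP^2}(-3)$-valued $2$-cocycle, assembled from the odd coordinates of the three charts, represents a nonzero class in $H^2(\PP^2, \O(-3)) \cong \CC$. The delicate point will be keeping track of the odd-variable contributions to the even transition functions and verifying that the obstruction does not vanish; this is precisely the step that fails for $\PP_\Pi^1$, where the odd dimension is $1$, the thickening is absent ($\J^2 = 0$), and $H^2$ vanishes, recovering $\Pic(\PP_\Pi^1) = \ZZ$ of Lemma~\ref{lemma:PicPPi1=Z}.
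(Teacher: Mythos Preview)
Your proposal is correct and follows the same overall route as the paper: reduce via a standard extension and Lemma~\ref{lemma:RestrictionMapInjectiveQGr} to the single base case $\Pic(\PP_\Pi^2)=0$, then handle that case by an explicit \v{C}ech computation on the three toric charts showing that the transition functions of $\O_{\PP^2}(1)$ admit no consistent lift. The only difference is packaging: you phrase the base case as nonvanishing of the connecting map $\delta\colon \Pic(\PP^2)\to H^2(\PP^2,\Omega^2_{\PP^2})$ arising from $0\to\Omega^2\to\O_{X,\0}^\times\to\O_{\PP^2}^\times\to 0$, whereas the paper works with $\K_X^\times/\O_X^\times$ and the explicit toric units directly---but the actual calculation (lift $z_i/z_j$, discover an unavoidable factor of the form $(1+\xi_i\xi_j)$ on $U_{12}$, check its class is nonzero) is the same one the paper summarizes as ``an explicit computation reveals that this is not possible unless $\ell=0$.''
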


\begin{proof}
    Consider the commutative diagram
\begin{center}
\begin{tikzcd}
    \PP^2 \arrow{r} \arrow{d} & \Gr(r,n) \arrow{d} \\
    \PP_\Pi^2 \arrow{r} & \QGr(r,n)
\end{tikzcd}
\end{center}
where the vertical arrows are the natural closed immersions and the horizontal arrows are standard extensions. Applying the Picard group functor, we obtain
\begin{center}
\begin{tikzcd}
    \ZZ & \ZZ \arrow{l}{\sim} \\
    \Pic(\PP_\Pi^2) \arrow[hookrightarrow]{u} & \Pic(\QGr(r,n)) \arrow[hookrightarrow]{u}, \arrow{l}
\end{tikzcd}
\end{center}
so it suffices to show that $\Pic(\PP_\Pi^2) = 0$.

Using the description of $X = \PP_\Pi^2$ as a toric supervariety with supertorus $Q(1)^2$, it is straightforward to compute the even coordinates on the affine charts:
\begin{align*}
    \CC[U_0]_\0 &= \CC[t_1, t_2, t_1t_2 \xi_1 \xi_2] \\
    \CC[U_1]_\0 &= \CC[t_1^{-1}, t_1^{-1}t_2(1+\xi_1\xi_2), t_1^{-2}t_2 \xi_1 \xi_2] \\
    \CC[U_2]_\0 &= \CC[t_2^{-1}, t_1t_2^{-1}(1-\xi_1\xi_2), t_1t_2^{-2} \xi_1 \xi_2] \\
    \CC[U_{01}]_\0 &= \CC[t_1^{\pm 1}, t_2, t_2 \xi_1 \xi_2] \\
    \CC[U_{02}]_\0 &= \CC[t_1, t_2^{\pm 1}, t_1 \xi_1 \xi_2] \\
    \CC[U_{12}]_\0 &= \CC[t_1^{-1}, (t_1^{-1}t_2(1+\xi_1\xi_2))^{\pm 1}, t_1^{-1} \xi_1 \xi_2]
\end{align*}
Now from the exact sequence
$$0 \to \O_X^\times \to \K_X^\times \to \K_X^\times / \O_X^\times \to 0$$
we obtain
$$H^0(\K_X^\times) \to H^0(\K_X^\times / \O_X^\times) \to \Pic(X) \to 0$$
since $\K_X^\times$ is a constant sheaf. Using the trivialization over the open cover $\{U_0, U_1, U_2\}$, we note that $$H^0(\K_X^\times / \O_X^\times) = \{(f_0, f_1, f_2) \mid f_i \in \CC(U_i)^\times / \CC[U_i]^\times, f_i/f_j \in \CC[U_{ij}]^\times\}.$$
Hence $\Pic X$ consists of the data
\begin{align*}
    \frac{f_1}{f_0} &= t_1^\ell \cdot \left( 1 + \sum_{m_2>0} b_m t^m \xi_1 \xi_2 \right) \\
    \frac{f_2}{f_0} &= t_2^\ell \cdot \left( 1 + \sum_{m_1>0} c_m t^m \xi_1 \xi_2 \right)
\end{align*}
for $\ell \in \ZZ$ and $b_m, c_m \in \CC$, such that
\begin{align*}
    \frac{f_2}{f_1} &= (t_1^{-1}t_2(1+\xi_1\xi_2))^\ell \cdot \left(1+... \right).
\end{align*}
An explicit computation reveals that this is not possible unless $\ell=0$, meaning the image of $\Pic(\PP_\Pi^2) \to \Pic(\PP^2) \cong \ZZ$ is 0, as desired.
\end{proof}

Accumulating the results of the Lemmas \ref{lemma:PicPPi1=Z} and \ref{lemma:PicQGr=0} together with the cases of $X$ a point, we obtain:
\begin{proposition}\label{prop:PicQGr}
    Let $0 \leq r \leq n$. Then $$\Pic(\QGr(r,n)) = \begin{cases}
        \ZZ & r=1, n=2 \\
        0 & \text{otherwise}
    \end{cases}$$
\end{proposition}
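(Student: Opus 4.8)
The plan is to run a case analysis on the pair $(r,n)$ over the full range $0\leq r\leq n$, invoking the two computed lemmas wherever they apply and filling the remaining cases by elementary observations. The four regions I would separate are: the degenerate endpoints $r=0$ and $r=n$; the exceptional value $(r,n)=(1,2)$; the interior range $0<r<n-1$; and the boundary stratum $r=n-1$ with $n\geq 3$.

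First I would dispatch the endpoints. When $r=0$ or $r=n$, the functor of points shows $\QGr(r,n)$ represents a unique $\Pi_V$-invariant submodule — the zero submodule, respectively all of $A\otimes V$ — so $X$ is a single reduced point and $\Pic(X)=0$; this also subsumes $r=n=1$. The value $(r,n)=(1,2)$ is precisely $\PP_\Pi^1$, so Lemma \ref{lemma:PicPPi1=Z} gives $\Pic(X)\cong\ZZ$. The interior range $0<r<n-1$ is exactly the hypothesis of Lemma \ref{lemma:PicQGr=0}, yielding $\Pic(X)=0$; note that this already covers every $r=1$ with $n\geq 3$, since then $1<n-1$, and it is also the precise range in which the standard extension $\PP^2=\Gr(1,3)\to\Gr(r,n)$ used in that lemma has room to exist.

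The one region untouched by the lemmas as stated is $r=n-1$ with $n\geq 3$, where $r\geq 2$. Here I would invoke the self-duality of the isomeric supergrassmannian: sending a $\Pi_V$-invariant submodule $U\subseteq A\otimes V$ to its annihilator $U^{\perp}\subseteq A\otimes V^{*}$ defines an isomorphism $\QGr(r,n)\cong\QGr(n-r,n)$, once one identifies $(V,\Pi_V)$ with its dual isomeric space $(V^{*},\Pi_{V^{*}})$. This gives $\QGr(n-1,n)\cong\QGr(1,n)$, and since $n\geq 3$ forces $1<n-1$, Lemma \ref{lemma:PicQGr=0} applied to the right-hand side yields $\Pic(X)=0$. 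Assembling the four regions produces exactly the stated dichotomy.

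The main obstacle I anticipate is making this duality rigorous as an isomorphism of superschemes rather than a mere bijection on closed points. The cleanest route is functorial: define the annihilator construction on $A$-points for every commutative superalgebra $A$, check that it preserves $\Pi$-invariance and freeness of the appropriate rank, verify naturality in $A$, and confirm that it carries the standard affine charts of $\QGr(r,n)$ to those of $\QGr(n-r,n)$. With the duality in hand, the remainder is bookkeeping over the already-established lemmas.
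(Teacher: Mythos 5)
Your proposal is correct, and on every case where the paper's one-sentence proof has explicit content it coincides with it: the endpoints $r=0$, $r=n$ are reduced points, $(r,n)=(1,2)$ is Lemma \ref{lemma:PicPPi1=Z}, and $0<r<n-1$ is Lemma \ref{lemma:PicQGr=0}. The genuine difference is your fourth region. The paper's proof --- ``accumulating the results of the Lemmas together with the cases of $X$ a point'' --- never addresses $r=n-1$ with $n\geq 3$, a case covered by neither lemma (there $r\geq 2$, so the hypothesis $0<r<n-1$ of Lemma \ref{lemma:PicQGr=0} fails). The paper is silently relying on the identification $\QGr(n-1,n)\cong\QGr(1,n)\cong\PP_\Pi^{n-1}$; this reliance is visible elsewhere (Proposition \ref{prop:PiPicQGr} treats $r\in\{1,n-1\}$ symmetrically, and the introduction asserts vanishing unless $\QGr(r,n)\cong\PP_\Pi^{n-1}$), but the isomorphism is never stated or proved in the text. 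Your explicit duality argument --- sending a $\Pi_V$-invariant submodule $U$ to its annihilator $U^\perp$ in the dual isomeric space, verified functorially on $A$-points --- is exactly the right way to close this, and reduces $r=n-1$, $n\geq 3$ to the interior case already settled by Lemma \ref{lemma:PicQGr=0}. The checks you flag are the right ones; the only additional subtlety worth noting is that the dual odd involution $\Pi_{V^*}$ may square to $-1$ under the natural sign conventions, which over $\CC$ is harmless since rescaling by $i$ converts it into an involution, and $\Pi_{V}$-invariance of $U$ directly gives $\Pi_{V^*}$-invariance of $U^\perp$. In short, your write-up follows the paper's skeleton but is more complete than the paper's own proof.
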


\subsection{Picard Groups of Toric Subvarieties}
In this section we use essentially the same techniques as in the prior section to compute the Picard group of toric supervarieties in $\QGr(r,n)$.

Recall that we write $\q(1)^n = \Span_\CC \{x_1, ..., x_n, \theta_1, ..., \theta_n\}$, where $[\theta_i, \theta_j] = \delta_{ij} x_i$, for the Lie superalgebra of the supertorus $Q(1)^n$.

Let $y \in |\QGr(r,n)|$ be a closed point. As in \cite{Jankowski2}, its stabilizer is
$$\Stab_{q(1)^n} y = \Span_\CC \left\{ \sum_{i \in I_j} x_i \;\Bigg|\; j=1, ..., s \right\} + \Span_\CC \left\{ \sum_{i \in I_j} \theta_i \;\Bigg|\; j=1, ..., s \right\}$$ for some partition $I_1 \sqcup ... \sqcup I_s = \{1, ..., n\}$. We may then let $T \subseteq Q(1)^n$ be the subgroup 
$$T=\prod_{j=1}^s \prod_{i \in I_j \backslash \max(I_j)} Q(1)$$
so that the orbit closure $X = \cl(Q(1)^n \cdot y)$ is a faithful toric supervariety for $T$. Its Lie superalgebra is
$$\t = \Span_\CC\{x_i, \theta_i \mid i \neq \max(I_j) \text{ for any } j=1, ..., s\},$$
and we write $\t_\RR$ for the $\RR$-linear span of the same basis vectors. Then, since stabilizers of points are sums of unit vectors, it follows that the decorated fan will be a complete fan on rays of the form $\pm \RR_+ (x_{i_1} + ... + x_{i_k})$ decorated by $\CC(\theta_{i_1} + ... + \theta_{i_k})$.

In particular, the fermionic sheaf $\F_X \cong \Omega_{X_\red}^1$ as in the case of $X=\QGr(r,n)$. This can also be seen at the level of affine toric charts of $X$, each of which is generated by a collection of even weight vectors that reduce to the ordinary even generators, as well as one odd generator in each of the same weight spaces. We then have:

\begin{lemma}\label{lemma:RestrictionMapInjectiveToric}
    The restriction map $\Pic(X) \to \Pic(X_\red)$ is injective.
\end{lemma}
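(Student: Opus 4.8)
The plan is to imitate the proof of Lemma \ref{lemma:RestrictionMapInjectiveQGr} almost verbatim, since the hypotheses that drive that argument have been arranged to hold for the toric subvariety $X$ as well. First I would build the same filtration of the even structure sheaf by powers of the odd ideal, setting $X_i = (|X|, \O_{X,\0}/(\J_X^{2i})_\0)$ with $X_0 = X_\red$ and $X_k = X$ for $k = \lfloor n/2 \rfloor$ (using that $\J_X^{2(k+1)}=0$, which follows from the odd dimension bound). For each step I would write down the short exact sequence of sheaves of abelian groups
\begin{align*}
0 \to (\J_X^{2i})_\0 / (\J_X^{2(i+1)})_\0 \to \left( \O_{X,\0}/(\J_X^{2(i+1)})_\0 \right)^\times \to \left( \O_{X,\0}/(\J_X^{2i})_\0 \right)^\times \to 0,
\end{align*}
with the first map $f \mapsto 1+f$, and pass to the long exact cohomology sequence to get maps $\Pic(X_{i+1}) \to \Pic(X_i)$ whose kernels are controlled by $H^1(X, (\J_X^{2i})_\0/(\J_X^{2(i+1)})_\0)$.

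The crux is then to show each of these $H^1$ groups vanishes, which reduces to an identification of the graded pieces of the odd ideal. The key input is the remark preceding the statement that, for a toric orbit closure $X$, one still has $\F_X = \J_X/\J_X^2 \cong \Omega^1_{X_\red}$, with the analogous higher identifications $\J_X^{2i}/\J_X^{2i+1} \cong \Omega^{2i}_{X_\red}$; as in the previous lemma the even graded piece $(\J_X^{2i})_\0/(\J_X^{2(i+1)})_\0$ is isomorphic to $\J_X^{2i}/\J_X^{2i+1}$. Once this is in hand, I would invoke the vanishing of Hodge cohomology off the diagonal, $h^q(X_\red, \Omega^p_{X_\red}) = 0$ for $p \neq q$, to conclude $H^1(X, \Omega^{2i}_{X_\red}) = 0$ for all $i \geq 1$. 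Composing the resulting injections yields that $\Pic(X) \to \Pic(X_\red)$ is injective.

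The main obstacle, and the only genuine difference from Lemma \ref{lemma:RestrictionMapInjectiveQGr}, is justifying the off-diagonal Hodge vanishing for $X_\red$. In the supergrassmannian case $X_\red \cong \Gr(r,n)$ is a smooth projective rational homogeneous space, for which this vanishing is classical. Here $X_\red$ is only a (possibly singular) projective toric variety, so I cannot appeal directly to the same fact. I would address this by noting that $X_\red$ is a complete normal toric variety, for which the relevant cohomology of the sheaves of Zariski $p$-forms $\widetilde{\Omega}^p$ vanishes off the diagonal by the toric Bott-type / Danilov vanishing theorems; the precise statement needed is that $H^q(X_\red, \widetilde{\Omega}^p_{X_\red}) = 0$ whenever $p \neq q$ for complete simplicial (or more generally normal) toric varieties. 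Since the decorated fan produced above is complete and its cones are generated by sums of distinct unit vectors, $X_\red$ falls into this class, and the argument goes through. If one prefers to avoid singular toric subtleties, an alternative is to observe that the orbit closures in question decompose as products of projective spaces and simplices, for which the Hodge vanishing is elementary — but I would flag the general toric vanishing as the cleanest justification to make explicit.
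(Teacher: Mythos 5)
Your proposal is correct and is essentially the paper's own proof: the paper's entire argument consists of citing Danilov's vanishing theorem (\cite[Corollary 12.7]{Danilov}) for $H^1(X, \Omega^q_X) = 0$ when $q > 1$, after which the filtration argument of Lemma \ref{lemma:RestrictionMapInjectiveQGr} applies verbatim, exactly as you describe. Two small cautions: you only need the case $q = 1 < p$ of off-diagonal vanishing (claiming the full $p \neq q$ vanishing for all complete normal toric varieties is more than is required and more than is safely quotable), and your parenthetical fallback that the orbit closures decompose as products of projective spaces is false in general (e.g.\ the toric variety of the octahedron $\Delta_{2,4}$, or the pentagon of Example \ref{ex:PicPentagon}), so the Danilov vanishing is indeed the right justification to make explicit.
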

\begin{proof}
    A result of Danilov (\cite[Corollary 12.7]{Danilov}) yields $H^1(X, \Omega^q_X) = 0$ for $q>1$, so the argument of Lemma \ref{lemma:RestrictionMapInjectiveQGr} applies.
\end{proof}

Let $P$ be the polytope corresponding to $X_\red$, and note that such a polytope induces a unique toric supervariety $X_P \subseteq \QGr(r,n)$, up to isomorphism. In particular, $X_P$ may be described by a decorated polytope which is uniquely determined by $P$, and equals the image of $X_P$ under an ``odd momentum map" $\QGr(r,n) \to \Pi \uq(n)^*$ (see \cite{Jankowski2}). Considering $P$ as a subset of $(\t_\RR)_\0^*$, its edges are parallel to either $x_i^*$ or $x_i^*-x_j^*$ for some $i,j \in \{1, ..., n\}$. The fan $\Sigma_P$ dual to $P$ consists of rays of the form $\pm \RR_+ (x_{i_1} + ... + x_{i_k})$ as above.

\begin{definition}\label{def:pcp}
    The \textit{parallel component number} $pc(P)$ of a polytope $P \subseteq \RR^n$ is any of the following equivalent notions:
    \begin{enumerate}        
        \item The number of lines $L \in \PP(\RR^n)$ such that the 1-skeleton of $P$ becomes disconnected upon removal of the edges parallel to $L$.

        \item The number of 2-colorings of the vertices of $P$ such that all the edges between differently colored vertices are parallel.

        \item The number of codimension-1 hyperplanes $H \subseteq (\RR^n)^*$ such that $\{\sigma \cap H \mid \sigma \in \Sigma+P\}$ is a subfan of $\Sigma_P$.

        \item The number of surjective toric morphisms $X_P \to \PP_\Pi^1$, up to automorphism of $\PP_\Pi^1$.
    \end{enumerate}
\end{definition}

\begin{example}
    The supervarieties $\PP_\Pi^1 \times \PP_\Pi^2$ and $\PP_\Pi^1 \times \PP_\Pi^1 \times \PP_\Pi^1$ are described by a triangular prism and a cube, respectively. These polytopes have parallel component numbers 1 and 3, respectively, as seen in Figure \ref{fig:triangleAndCube}.  In view of the forthcoming Proposition \ref{prop:PicTSV}, their respective Picard groups are $\ZZ$ and $\ZZ^3$.
\end{example}

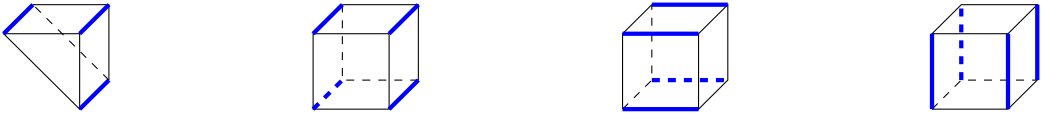
\begin{figure}
    \centering
    \begin{subfigure}{.25\textwidth}
        \centering
        \begin{tikzpicture}
        \coordinate (A) at (1,1,1);
        \coordinate (B) at (1,0,1);
        \coordinate (C) at (0,1,1);
        \coordinate (A') at (1,1,0);
        \coordinate (B') at (1,0,0);
        \coordinate (C') at (0,1,0);
        
        \draw[black] (A) -- (B) -- (C) -- cycle;
        \draw[black] (B') -- (A') -- (C');
        \draw[black, dashed] (B') -- (C');
        
        \draw[blue, ultra thick] (A) -- (A');
        \draw[blue, ultra thick] (B) -- (B');
        \draw[blue, ultra thick] (C) -- (C');
        \end{tikzpicture}
    \end{subfigure}
    ~
    ~
    ~
    \begin{subfigure}{.25\textwidth}
        \centering
        \begin{tikzpicture}
        \coordinate (A) at (1,1,1);
        \coordinate (B) at (1,0,1);
        \coordinate (C) at (0,1,1);
        \coordinate (D) at (0,0,1);
        \coordinate (A') at (1,1,0);
        \coordinate (B') at (1,0,0);
        \coordinate (C') at (0,1,0);
        \coordinate (D') at (0,0,0);
        
        \draw[black] (A) -- (B) -- (D) -- (C) -- cycle;
        \draw[black] (B') -- (A') -- (C');
        \draw[black, dashed] (B') -- (D') -- (C');
        
        \draw[blue, ultra thick] (A) -- (A');
        \draw[blue, ultra thick] (B) -- (B');
        \draw[blue, ultra thick] (C) -- (C');
        \draw[blue, ultra thick, dashed] (D) -- (D');
        \end{tikzpicture}
    \end{subfigure}
    ~
    ~
    ~
    \begin{subfigure}{.25\textwidth}
        \centering
        \begin{tikzpicture}
        \coordinate (A) at (1,1,1);
        \coordinate (B) at (1,0,1);
        \coordinate (C) at (0,1,1);
        \coordinate (D) at (0,0,1);
        \coordinate (A') at (1,1,0);
        \coordinate (B') at (1,0,0);
        \coordinate (C') at (0,1,0);
        \coordinate (D') at (0,0,0);
        
        \draw[black] (A) -- (B) -- (B') -- (A') -- cycle;
        \draw[black] (D) -- (C) -- (C');
        \draw[black, dashed] (D) -- (D') -- (C');
        
        \draw[blue, ultra thick] (C) -- (A);
        \draw[blue, ultra thick] (D) -- (B);
        \draw[blue, ultra thick] (C') -- (A');
        \draw[blue, ultra thick, dashed] (D') -- (B');
        \end{tikzpicture}
    \end{subfigure}
    ~
    ~
    ~
    \begin{subfigure}{.25\textwidth}
        \centering
        \begin{tikzpicture}
        \coordinate (A) at (1,1,1);
        \coordinate (B) at (1,0,1);
        \coordinate (C) at (0,1,1);
        \coordinate (D) at (0,0,1);
        \coordinate (A') at (1,1,0);
        \coordinate (B') at (1,0,0);
        \coordinate (C') at (0,1,0);
        \coordinate (D') at (0,0,0);
        
        \draw[black] (A) -- (C) -- (C') -- (A') -- cycle;
        \draw[black] (D) -- (B) -- (B');
        \draw[black, dashed] (D) -- (D') -- (B');
        
        \draw[blue, ultra thick] (B) -- (A);
        \draw[blue, ultra thick] (D) -- (C);
        \draw[blue, ultra thick] (B') -- (A');
        \draw[blue, ultra thick, dashed] (D') -- (C');
        \end{tikzpicture}
    \end{subfigure}
    
    \caption{One parallel component in a triangular prism; three in a cube}
    \label{fig:triangleAndCube}
\end{figure}

\begin{proposition}\label{prop:PicTSV}
    $\Pic(X_P) \cong \ZZ^{pc(P)}$, a saturated subgroup of $\Pic((X_P)_\red)$
\end{proposition}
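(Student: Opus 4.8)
The plan is to identify $\Pic(X_P)$ with a subgroup of the classical toric Picard group $\Pic((X_P)_\red)$ and then compute that subgroup from the fan. By Lemma \ref{lemma:RestrictionMapInjectiveToric} the restriction map $\Pic(X_P) \hookrightarrow \Pic((X_P)_\red)$ is injective, and since $X_P$ is the closure of a supertorus orbit in the projective variety $\QGr(r,n)$, the variety $(X_P)_\red$ is a complete toric variety whose Picard group is the free lattice of support functions on $\Sigma_P$ modulo global characters (the rank formula of \cite{Ewald} quoted in the introduction). The tower of Lemma \ref{lemma:RestrictionMapInjectiveToric} shows more: the image of $\Pic(X_P)$ is contained in the kernel of the first obstruction map $\delta : \Pic((X_P)_\red) \to H^2((X_P)_\red, \Omega^2)$, the connecting homomorphism of the square-zero extension $0 \to \Omega^2_{(X_P)_\red} \to (\O_{X,\0}/(\J_X^4)_\0)^\times \to \O_{(X_P)_\red}^\times \to 0$. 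Crucially $\delta$ is a group homomorphism into a $\CC$-vector space, so its kernel is automatically a saturated subgroup of $\Pic((X_P)_\red)$. Thus it suffices to prove the two inclusions $\ZZ^{pc(P)} \subseteq \mathrm{Im}(\Pic(X_P))$ and $\ker \delta \subseteq \ZZ^{pc(P)}$, which together force $\mathrm{Im}(\Pic(X_P)) = \ker \delta \cong \ZZ^{pc(P)}$ and deliver both the isomorphism and the saturation at once.

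For the first inclusion I would use characterization (d) of the parallel component number: there are exactly $pc(P)$ surjective toric morphisms $\pi_1, \dots, \pi_{pc(P)} : X_P \to \PP_\Pi^1$ up to automorphism. Each $\pi_i$ pulls back the generator of $\Pic(\PP_\Pi^1) \cong \ZZ$ (Lemma \ref{lemma:PicPPi1=Z}) to an invertible sheaf on $X_P$, defining a homomorphism $\ZZ^{pc(P)} \to \Pic(X_P)$. On $(X_P)_\red$ these classes are the pullbacks along the induced projections $(X_P)_\red \to \PP^1$, i.e.\ the piecewise-linear support functions that are linear on each side of the respective separating hyperplane of characterization (c); distinct parallel components yield $\ZZ$-linearly independent functions, so the composite $\ZZ^{pc(P)} \to \Pic((X_P)_\red)$ is injective. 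This realizes $\ZZ^{pc(P)}$ inside $\mathrm{Im}(\Pic(X_P))$.

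The second inclusion is the heart of the matter, and I would establish it by the transition-function computation of Lemma \ref{lemma:PicQGr=0}, carried out chart by chart over the fan. Covering $X_P$ by its maximal affine charts $U_\sigma$, a class of $\ker \delta$ is represented by even rational functions $f_\sigma = \chi^{m_\sigma}(1 + g_\sigma)$ whose leading monomials $\{m_\sigma\}$ form the underlying support function and whose nilpotent corrections $g_\sigma$ are built from the $\xi_i \xi_j$. The requirement that every ratio $f_\sigma / f_{\sigma'}$ be an even unit of $\CC[U_\sigma \cap U_{\sigma'}]_\0$ is governed by the relations $[\theta_i, \theta_j] = \delta_{ij} x_i$, and the obstruction computed for $\PP_\Pi^2$ is exactly the local model: wherever the support function jumps across a wall that is not cut out by one of the $pc(P)$ separating hyperplanes, one finds an embedded triangular configuration forcing a nonzero class in $H^2(\Omega^2)$. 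Hence a support function in $\ker \delta$ can change only across the $pc(P)$ separating hyperplanes, and therefore lies in the sublattice constructed above.

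The main obstacle is this last step: propagating the single $\PP_\Pi^2$ obstruction through an arbitrary complete fan and verifying that the jumps which survive are precisely those along the $pc(P)$ separating hyperplanes, with no extra rank slipping in. The cohomological packaging above is designed to absorb the rest of the difficulty: once the local obstruction is shown to detect every non-separating jump, the matching of the upper and lower bounds and the saturation of the image both become formal. So the remaining genuine work is the combinatorial verification that $\ker \delta$ has exactly the rank $pc(P)$ predicted by the construction.
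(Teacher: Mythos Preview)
Your strategy is sound at the level of the sandwich $\ZZ^{pc(P)} \subseteq \Im(\Pic(X_P)) \subseteq \ker\delta$, and the saturation argument via the $\CC$-valued obstruction is clean. But the route you take diverges from the paper's, and the step you flag as ``the main obstacle'' is exactly where the paper does something more direct that you are missing.

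The paper does not pass through $H^2(\Omega^2)$ or compute $\delta$ at all. Instead it works entirely on the $1$-skeleton of $P$: an element of $H^1(\O_{X,\0}^\times)$ is encoded by a transition function $f_{vw}$ along each edge $vw$, subject to the cocycle condition around every cycle. The key structural fact is that along an edge parallel to $x_i^*-x_j^*$, the even units of the overlap ring are generated by the thickened coordinate $t_it_j^{-1}(1-\xi_i\xi_j)$, so (after using Lemma~\ref{lemma:RestrictionMapInjectiveToric} to kill the nilpotent ambiguity) $f_{vw}$ must equal $(t_it_j^{-1}(1-\xi_i\xi_j))^\ell$. Now the cocycle condition around any cycle forces the product of these to be $1$, and the only way to cancel a factor $(1-\xi_i\xi_j)^\ell$ is against $(1-\xi_i\xi_j)^{-\ell}$ coming from another edge in the \emph{same} direction $x_i^*-x_j^*$. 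Hence $\ell=0$ along $e$ unless every cycle through $e$ contains a parallel edge, which is precisely characterization (a) of $pc(P)$. The converse is immediate: for each parallel component the exponent is free. This gives both inclusions and the saturation in one elementary stroke.

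Your upper-bound heuristic, by contrast, is not quite right as stated. The obstruction is not localized at ``embedded triangular configurations'': what kills a non-separating edge is the existence of \emph{some cycle} through it with no parallel edge, which need not be a triangle or even bound a $2$-face. So propagating the $\PP_\Pi^2$ computation via embedded triangles would not suffice in general; you would have to reformulate the detection of $\delta$ in terms of cycles in the $1$-skeleton, at which point you have essentially reproduced the paper's argument inside a heavier cohomological wrapper. Your lower bound via pullback along the $pc(P)$ projections $X_P\to\PP_\Pi^1$ is correct and pleasant, but the paper obtains the same thing for free from the freeness of the exponents along each parallel component.
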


\begin{proof}
    An element of $\Pic(X) \cong H^1(\O_{X,0}^\times)$ contains the data of a transition function from each maximal affine toric open to each other with which it shares a maximal proper subset that is an affine toric open, i.e.\ a codimension-1 cone in the language of fans. This can be encoded by a transition function from each vertex of $P$ to each adjacent vertex, along the incident edge. Since the 1-skeleton of $P$ is connected, the data of these transition functions conversely determine a unique element of $\Pic(X)$.
    
    A transition function $f_{vw}$ from $U_v$ to $U_w$ is a unit in $\O_{X,\0}(U_v \cap U_w)$, and they are subject to cocycle conditions $f_{v_1v_2} f_{v_2v_3} \cdots f_{v_{k-1}v_k} f_{v_k v_1} = 1$. By Lemma \ref{lemma:RestrictionMapInjectiveToric}, we may write the transition functions in the form $t_i^\ell$ or $(t_it_j^{-1}(1-\xi_i\xi_j))^\ell$ for some $\ell \in \ZZ$, depending on whether the relevant edge is parallel to $x_i^*$ or $x_i^* - x_j^*$.
    
    Applying the cocycle condition, we see that the only way to cancel a $(1-\xi_i \xi_j)^\ell$ term is with an opposite $(1-\xi_i \xi_j)^{-\ell}$ term. Hence $\ell=0$ unless the cycle contains another edge parallel to $x_i^*-x_j^*$. It then follows that $t_i^\ell$ can be canceled out only if the cycle contains another edge parallel to $x_i^*$. In particular, a transition function along an edge $e$ can be nonzero only if every cycle containing $e$ contains another edge parallel to $e$. The number of such directions is counted by part (a) of Definition \ref{def:pcp}

    Conversely, for each direction counted by $pc(P)$, the choice of $\ell \in \ZZ$ is free and independent of other such choices. These choices yield the Picard group $\Pic(X_P)$, as desired.
\end{proof}

The same conclusion applies to the $Q(1)^n$ toric supervariety of any complete fan whose rays are of the form $\pm \RR_+ (x_{i_1} + ... + x_{i_k})$, decorated by $\theta_{i_1}+...+\theta_{i_k}$. However, we caution that not every fan of this form can admit these decorations. For example, the rays $\RR_+(x_1+x_2)$ and $\RR_+(x_1+x_3)$ cannot together form a 2-cone because $[\theta_1+\theta_2, \theta_1+\theta_3] \notin \Span_\CC\{x_1+x_2, x_1+x_3\}$. In terms of the polytope, the edges must be parallel to $x_i^*$ or $x_i^* - x_j^*$.

\begin{example}\label{ex:PicPentagon}
    Let $X$ be the blowup of $\PP^1 \times \PP^1$ at a torus-fixed point, so the corresponding polytope $P$ is a pentagon as in Figure \ref{fig:pentagon}. If the rays of $\Sigma_P$ are appropriately decorated by $\theta_1, \theta_2,$ and $\theta_1+\theta_2$, then the Picard group of the resulting toric supervariety $X_P$ is isomorphic to $\ZZ^2$.

    Note that $X_P$ is not a blowup of $\PP_\Pi^1 \times \PP_\Pi^1$; any new rays introduced by a blowup operation on a toric supervariety must be decorated by 0.
\end{example}

\begin{remark}\label{rem:PicTSV}
    In the case that $X_P$ does arise as a supertorus orbit closure in $\QGr(r,n)$, then the argument of the forthcoming Lemma \ref{lemma:PiPic=0IfNoSimplexFactor} shows that each contribution to $pc(P)$ arises from a factor of $\PP_\Pi^1$ in $X_P$. Due to the isomorphism of $\Pic(\PP_\Pi^1) \cong H^1(O_{X,\0}^\times) \cong \Pic(\PP^1)$, it follows that any invertible sheaf on $X_P$ can be understood as a tensor product of a choice of invertible sheaf $\O(\ell_i)$ on each factor of $\PP_\Pi^1$. That is, if $X_P \cong (\PP_\Pi^1)^d \times Y$ for $d$ maximal, then an invertible sheaf on $X_P$ is determined entirely by its restriction to any closed subvariety $(\PP^1)^d \times pt$.
\end{remark}

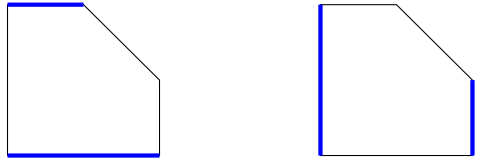
\begin{figure}
    \begin{subfigure}{.25\textwidth}
        \centering
        \begin{tikzpicture}
        \coordinate (A) at (0,0);
        \coordinate (B) at (2,0);
        \coordinate (C) at (2,1);
        \coordinate (D) at (1,2);
        \coordinate (E) at (0,2);
        
        \draw[black] (A) -- (B) -- (C) -- (D) -- (E) -- cycle;
        
        \draw[blue, ultra thick] (A) -- (B);
        \draw[blue, ultra thick] (D) -- (E);
        \end{tikzpicture}
    \end{subfigure}
    ~
    ~
    ~
    \begin{subfigure}{.25\textwidth}
        \centering
        \begin{tikzpicture}
        \coordinate (A) at (0,0);
        \coordinate (B) at (2,0);
        \coordinate (C) at (2,1);
        \coordinate (D) at (1,2);
        \coordinate (E) at (0,2);
        
        \draw[black] (A) -- (B) -- (C) -- (D) -- (E) -- cycle;
        
        \draw[blue, ultra thick] (A) -- (E);
        \draw[blue, ultra thick] (B) -- (C);
        \end{tikzpicture}
    \end{subfigure}
    \caption{Two parallel components in the pentagon of Example \ref{ex:PicPentagon}}
    \label{fig:pentagon}
\end{figure}

\section{{$\Pi$}-Picard Sets}\label{sec:PiPic}

If $X$ is a supervariety, write $X/\J^2$ for its closed subvariety $(|X|, \O_X/\J_X^2)$. By removing the grading, since $\O_X/\J_X^2$ is a sheaf of \textit{commutative} rings, we may view $X/\J^2$ as an honest (non-super) scheme, which we call $X_\thick$. Observe that $\Pic_\Pi(X/\J^2) = H^1(X, (\O_X/\J_X^2)^\times) = \Pic(X_\thick)$ and $\Pic_\Pi(X_\red) = H^1(X, (\O_X/\J_X)^\times) = \Pic(X_\red)$.

\subsection{{$\Pi$}-Picard Set of the Isomeric Supergrassmannian}

Once again, let $X=\QGr(r,n)$ for some integers $n \geq r \geq 0$.

\begin{lemma}\label{lemma:RestrictionMapFactorsQGr}
    The restriction map $\Pic_\Pi(X) \to \Pic_\Pi(X_\red)$ factors as an injection $\Pic_\Pi(X) \to \Pic_\Pi(X/\J^2)$ followed by a surjection $\Pic_\Pi(X/\J^2) \to \Pic_\Pi(X_\red)$.
\end{lemma}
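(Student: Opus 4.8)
The plan is to realize both maps through the $\J_X$-adic filtration of the sheaf of units $\O_X^\times$, exactly as in Lemma \ref{lemma:RestrictionMapInjectiveQGr}, but now retaining the odd part of the structure sheaf, so that $\Pic_\Pi(X) = H^1(X,\O_X^\times)$ must be computed with nonabelian cohomology. The factorization itself is formal: the reduction $\O_X^\times \to \O_{X_\red}^\times$ factors through $(\O_X/\J_X^2)^\times$, so by functoriality of $H^1$ the restriction map $\Pic_\Pi(X) \to \Pic_\Pi(X_\red)$ factors through $\Pic_\Pi(X/\J^2)$. It then remains to prove that the first map is injective and the second surjective. Throughout I would use the identification $\J_X^k/\J_X^{k+1} \cong \Omega^k_{X_\red}$ for all $k$ (the same computation from \cite{Noja} that gives $\J_X/\J_X^2 \cong \Omega^1_{X_\red}$, since the associated graded of $\O_X$ along $\J_X$ is the exterior algebra on $\F_X$), together with the Hodge vanishing $H^q(\Gr(r,n),\Omega^p)=0$ for $p \neq q$.

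For injectivity I would factor $\Pic_\Pi(X) = \Pic_\Pi(X/\J^N) \to \cdots \to \Pic_\Pi(X/\J^3) \to \Pic_\Pi(X/\J^2)$, where $N$ satisfies $\J_X^N = 0$, and show each step $\Pic_\Pi(X/\J^{k+1}) \to \Pic_\Pi(X/\J^k)$ is injective for $k \geq 2$. The kernel of $(\O_X/\J_X^{k+1})^\times \to (\O_X/\J_X^k)^\times$ is $1 + \J_X^k/\J_X^{k+1} \cong (\Omega^k_{X_\red},+)$. The crucial point is that this kernel is \emph{central}: for $f \in \J_X^k$ and a local unit $u = u_\0 + u_\1$, supercommutativity gives $fu - uf = 2 f_\1 u_\1$, and since $f_\1 \in \J_X^k$ while $u_\1 \in \O_{X,\1} \subseteq \J_X$, this lies in $\J_X^{k+1} = 0$. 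Each step is therefore a central extension of sheaves of groups, yielding an exact sequence of pointed sets $H^1(\Omega^k_{X_\red}) \to \Pic_\Pi(X/\J^{k+1}) \to \Pic_\Pi(X/\J^k)$ in which the fibers of the second arrow are exactly the orbits of the natural action of the abelian group $H^1(\Omega^k_{X_\red})$. For $k \geq 2$ Hodge vanishing gives $H^1(\Omega^k_{X_\red}) = 0$, so this action is trivial and the map is injective; composing over all $k \geq 2$ yields injectivity of $\Pic_\Pi(X) \to \Pic_\Pi(X/\J^2)$.

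For surjectivity I would treat the remaining step $k=1$ separately. Here $(\O_X/\J_X^2)^\times$ is abelian, because the only source of noncommutativity, a product of two odd elements, lands in $\J_X^2 = 0$; hence the extension $0 \to \Omega^1_{X_\red} \to (\O_X/\J_X^2)^\times \to \O_{X_\red}^\times \to 0$ is an honest short exact sequence of abelian sheaves. Its long exact sequence contains $\Pic_\Pi(X/\J^2) \to \Pic(X_\red) \to H^2(\Omega^1_{X_\red})$, and $H^2(\Omega^1_{X_\red}) = 0$ by Hodge vanishing, giving the desired surjectivity. I expect the main obstacle to be the nonabelian bookkeeping in the injectivity step: one must verify centrality of the kernels carefully and invoke the precise fact that, for a central extension, the fibers of the induced map on $H^1$ are the orbits of $H^1$ of the kernel, so that vanishing of that $H^1$ upgrades triviality of the basepoint fiber to genuine injectivity of the pointed-set map.
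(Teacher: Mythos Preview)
Your proposal is correct and follows the same approach as the paper: use the $\J_X$-adic filtration of $\O_X^\times$ together with the Hodge vanishing $H^q(\Gr(r,n),\Omega^p)=0$ for $p\neq q$, with $H^1(\Omega^k)=0$ (for $k\ge 2$) giving injectivity and $H^2(\Omega^1)=0$ giving surjectivity. The paper's proof simply says ``same as in Lemma \ref{lemma:RestrictionMapInjectiveQGr}'' for injectivity and invokes $H^2(\Gr(r,n),\Omega^1)=0$ for surjectivity; you have supplied the details it suppresses, in particular the nonabelian bookkeeping (centrality of $1+\J_X^k/\J_X^{k+1}$ in $(\O_X/\J_X^{k+1})^\times$ and the orbit description of fibers for central extensions), which is genuinely needed since $\O_X^\times$ is not abelian whereas Lemma \ref{lemma:RestrictionMapInjectiveQGr} worked with the abelian sheaf $\O_{X,\0}^\times$.
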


\begin{proof}
    Injectivity of $\Pic_\Pi(X) \to \Pic_\Pi(X/\J^2)$ is the same as in Lemma \ref{lemma:RestrictionMapInjectiveQGr}. Likewise, surjectivity of $\Pic_\Pi(X/\J^2) \to \Pic_\Pi(X_\red)$ follows from $H^2(\Gr(r,n), \Omega^1) = 0$.
\end{proof}

This lemma has two important consequences. For one, we now have a frame of reference in which to describe the cohomology set $\Pic_\Pi(X)$; that is, we can describe it as a subset of the group $\Pic_\Pi(X/\J^2) = \Pic(X_\thick)$. Secondly, it yields the following lemma.

\begin{lemma}\label{lemma:PiPicQGrThick=Z+C}
    If $0<r<n$, then $\Pic_\Pi(X/\J^2) \cong \ZZ \oplus \CC$. In particular, $\Pic_\Pi(\PP_\Pi^1)=\ZZ \oplus \CC$.
\end{lemma}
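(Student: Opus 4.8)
The plan is to recognize $X/\J^2 = (|X|, \O_X/\J_X^2)$ as a square-zero thickening of $X_\red \cong \Gr(r,n)$ whose conormal sheaf is $\J_X/\J_X^2 \cong \Omega^1_{X_\red}$ (the identification from \cite{Noja} already used above), and then to extract $\Pic(X_\thick) = \Pic_\Pi(X/\J^2)$ from the cohomology sequence attached to the thickening. Concretely, I would start from the short exact sequence of sheaves of abelian groups
$$0 \to \Omega^1_{X_\red} \to (\O_X/\J_X^2)^\times \to \O_{X_\red}^\times \to 0,$$
where the leftmost map is $f \mapsto 1+f$. This is a group homomorphism precisely because the ideal $\J_X/\J_X^2$ squares to zero in $\O_X/\J_X^2$, so that $(1+f)(1+g) = 1+(f+g)$; the reduction map on the right is surjective because a unit modulo $\J_X$ lifts to a unit modulo $\J_X^2$ by nilpotence.

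Taking the long exact sequence in cohomology and using $\Pic(X_\thick) \cong H^1(X,(\O_X/\J_X^2)^\times)$ and $\Pic(X_\red) \cong H^1(X,\O_{X_\red}^\times)$, I would isolate the segment
$$H^0((\O_X/\J_X^2)^\times) \to H^0(\O_{X_\red}^\times) \xrightarrow{\delta} H^1(\Omega^1_{X_\red}) \to \Pic(X_\thick) \to \Pic(X_\red) \to H^2(\Omega^1_{X_\red}).$$
Now I would feed in the Grassmannian inputs. Since the Hodge numbers of $\Gr(r,n)$ are concentrated on the diagonal, $H^1(\Gr(r,n),\Omega^1)\cong\CC$ (as $h^{1,1}=1$) and $H^2(\Gr(r,n),\Omega^1)=0$, the latter being exactly the vanishing invoked in Lemma \ref{lemma:RestrictionMapFactorsQGr}; together with $\Pic(\Gr(r,n))=\ZZ$ for $0<r<n$, the sequence collapses to
$$0 \to \CC \to \Pic(X_\thick) \to \ZZ \to 0.$$
Because $\ZZ$ is free this sequence splits, giving $\Pic_\Pi(X/\J^2) = \Pic(X_\thick) \cong \ZZ \oplus \CC$; specializing to $\QGr(1,2)\cong\PP_\Pi^1$ (which has $0<r=1<n=2$) yields the stated ``in particular.''

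The computation is formal once these ingredients are assembled, so the only genuinely delicate point is exactness at $H^1(\Omega^1_{X_\red})$, i.e.\ the injectivity of $H^1(\Omega^1_{X_\red}) \to \Pic(X_\thick)$, which requires $\delta = 0$. Rather than compute the connecting map directly (it is the $d\log$ map, which annihilates constants), I would deduce its vanishing from surjectivity of the leftmost map $H^0((\O_X/\J_X^2)^\times) \to H^0(\O_{X_\red}^\times)$: since $X_\red$ is projective and connected, $H^0(\O_{X_\red}^\times)=\CC^\times$, and each constant $c$ is the image of the global constant $c \in H^0((\O_X/\J_X^2)^\times)$. Thus the load-bearing external facts are the identification $\J_X/\J_X^2\cong\Omega^1_{X_\red}$ and the diagonal concentration of the Grassmannian Hodge numbers, both of which are available.
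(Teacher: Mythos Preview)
Your proof is correct and follows essentially the same route as the paper: both argue via the short exact sequence $0 \to \Omega^1_{X_\red} \to (\O_X/\J_X^2)^\times \to \O_{X_\red}^\times \to 0$, invoke surjectivity on $H^0$ to kill the connecting map, and read off $\Pic(X_\thick)$ as the extension of $\Pic(\Gr(r,n)) = \ZZ$ by $H^1(\Omega^1) = \CC$ using the diagonal Hodge numbers of the Grassmannian. Your write-up supplies a few details the paper leaves implicit (the reason $H^0$ surjects, the splitting over $\ZZ$, and the vanishing of $H^2(\Omega^1)$ already noted in Lemma~\ref{lemma:RestrictionMapFactorsQGr}), but the argument is the same.
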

\begin{proof}
    From the exact sequence $$0 \to \Omega^1 \to (\O_X/\J_X^2)^\times \to (\O_X/\J_X)^\times \to 0$$
    and from surjectivity of $H^0((\O_X/\J_X^2)^\times) \to H^0((\O_X/\J_X)^\times)$, we obtain the exact sequence
    \begin{center}
    \begin{tikzcd}[row sep=tiny]
        0 \arrow[r] & H^1(\Omega^1) \arrow[r] & \Pic(X_\thick) \arrow[r] & \Pic(X_\red) \arrow[r] & 0 \\
        & \CC \arrow[u,equal] & & \ZZ \arrow[u,equal] &
    \end{tikzcd}
    \end{center}
    and hence $\Pic_\Pi(X/\J^2) = \Pic(X_\thick) \cong \ZZ \oplus \CC$.
\end{proof}

It is then straightforward to see that for $X = \PP_\Pi^n$, the transition functions for a $\Pi$-invertible sheaf on $X/\J^2$ take the form
\begin{align*}
    f_i f_0^{-1} = t_i^\ell(1+c\xi_i)
\end{align*}
for some $\ell \in \ZZ$ and $c \in \CC$. To see which transition functions also work for $X$ itself, we calculate
\begin{align*}
    f_i f_j^{-1} &= (f_i f_0^{-1})(f_j f_0^{-1})^{-1} \\
    &= (t_i t_j^{-1})^\ell (1 + c(\xi_i - \xi_j) - c^2 \xi_i \xi_j)
\end{align*}
so that the only requirement is $c^2 = \ell$. Therefore

\begin{lemma}\label{lemma:PiPicPPi^n=ZcupZ}
    If $n>1$, then $\Pic_\Pi(\PP_\Pi^n) = \{(\ell,c) \in \ZZ \oplus \CC \mid c^2 = \ell\}$.
\end{lemma}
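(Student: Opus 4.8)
The plan is to pin down $\Pic_\Pi(\PP_\Pi^n)$ as a distinguished subset of the abelian group $\Pic_\Pi(\PP_\Pi^n/\J^2)$ and then decide which of its elements survive. Since $\PP_\Pi^n = \QGr(1,n+1)$ with $0 < 1 < n+1$, Lemma \ref{lemma:RestrictionMapFactorsQGr} makes the map $\Pic_\Pi(\PP_\Pi^n) \to \Pic_\Pi(\PP_\Pi^n/\J^2)$ injective, while Lemma \ref{lemma:PiPicQGrThick=Z+C} identifies the target with $\ZZ \oplus \CC$, a class $(\ell,c)$ being given on the standard cover by the \v{C}ech data $f_i f_0^{-1} = t_i^\ell(1 + c\xi_i)$. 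Thus the whole question becomes: for which $(\ell,c)$ do these transition functions, a priori only defined modulo $\J^2$, lift to genuine transition units in $\O_{\PP_\Pi^n}^\times$ on every overlap?

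First I would transport the problem onto the overlaps $U_{ij}$ with $i,j \neq 0$, where the cocycle identity gives
\begin{align*}
    f_i f_j^{-1} = (f_i f_0^{-1})(f_j f_0^{-1})^{-1} = (t_i t_j^{-1})^\ell\bigl(1 + c(\xi_i - \xi_j) - c^2\xi_i\xi_j\bigr).
\end{align*}
Writing $g_{ij} = t_i t_j^{-1}(1 - \xi_i\xi_j)$ and $h_{ij} = t_i t_j^{-1}(\xi_i - \xi_j)$ for the even and odd coordinates on $U_{ij}$, and using $g_{ij}^\ell = (t_i t_j^{-1})^\ell(1 - \ell\xi_i\xi_j)$ together with $h_{ij}g_{ij}^{\ell-1} = (t_i t_j^{-1})^\ell(\xi_i - \xi_j)$, this rearranges as
\begin{align*}
    f_i f_j^{-1} = g_{ij}^{\ell-1}\bigl(g_{ij} + c\,h_{ij}\bigr) + (\ell - c^2)(t_i t_j^{-1})^\ell\xi_i\xi_j.
\end{align*}
The first summand is a manifest unit on $U_{ij}$, so $f_i f_j^{-1}$ is regular if and only if the obstruction term $(\ell - c^2)(t_i t_j^{-1})^\ell\xi_i\xi_j$ is.

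The crux — and the step I expect to be the main obstacle — is to prove that $(t_i t_j^{-1})^\ell\xi_i\xi_j$ is not regular on $U_{ij}$ when $n > 1$. My plan is a weight computation in the even coordinate ring of $U_{ij}$ relative to the supertorus character lattice. I would show that no nonzero regular function of torus-weight $\ell(e_i - e_j)$ is purely of $\xi$-degree two: the only way to reach this weight through a product $h_{aj}h_{bj}$ of two odd coordinates forces, by positivity of the non-inverted torus exponents (only $g_{ij}$ is invertible on $U_{ij}$), both $a$ and $b$ to equal $i$, which is impossible since $a \neq b$. Consequently the $\xi$-degree-two part of any regular function of this weight is rigidly tied to its $\xi$-degree-zero part through $g_{ij}^\ell = (t_i t_j^{-1})^\ell(1 - \ell\xi_i\xi_j)$, so the bare monomial $(t_i t_j^{-1})^\ell\xi_i\xi_j$, which has vanishing $\xi$-degree-zero part, cannot occur alone. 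Here $n > 1$ is essential: it forces $\J^2 \neq 0$, so that $\xi_i\xi_j \neq 0$ and an overlap $U_{ij}$ with $i,j \neq 0$ exists; for $\PP_\Pi^1$ one has $\J^2 = 0$, the obstruction term vanishes identically, and no constraint appears.

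It then follows that $f_i f_j^{-1}$ is regular precisely when $\ell - c^2 = 0$, i.e. $c^2 = \ell$. For the converse I would check that when $c^2 = \ell$ the obstruction vanishes, $f_i f_j^{-1} = g_{ij}^{\ell-1}(g_{ij} + c\,h_{ij})$ is a genuine unit on every overlap, and the cocycle identities hold automatically because the cochain was defined as a ratio of the $f_i f_0^{-1}$; this exhibits $(\ell, c)$ as a bona fide class, realized by the tautological-type sheaf $\O_\Pi(\ell)$. Combining the injection into $\ZZ \oplus \CC$ with this exact description of the image yields $\Pic_\Pi(\PP_\Pi^n) = \{(\ell,c) \in \ZZ \oplus \CC \mid c^2 = \ell\}$.
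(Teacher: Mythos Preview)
Your proposal is correct and follows essentially the same route as the paper: inject $\Pic_\Pi(\PP_\Pi^n)$ into $\ZZ\oplus\CC$ via Lemmas~\ref{lemma:RestrictionMapFactorsQGr} and~\ref{lemma:PiPicQGrThick=Z+C}, write the transition data as $f_if_0^{-1}=t_i^\ell(1+c\xi_i)$, compute $f_if_j^{-1}=(t_it_j^{-1})^\ell(1+c(\xi_i-\xi_j)-c^2\xi_i\xi_j)$, and read off the constraint $c^2=\ell$. The paper asserts this last step in one line, whereas you supply the rewriting in terms of $g_{ij},h_{ij}$ and the weight argument showing $(t_it_j^{-1})^\ell\xi_i\xi_j$ is not regular on $U_{ij}$; this is a welcome elaboration but not a different method.
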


The only difference between the elements $(\ell, +\sqrt{\ell})$ and $(\ell, -\sqrt{\ell})$ is that the odd involution is negated; the sheaves themselves are isomorphic.

\begin{remark}\label{rem:PiPicQGr}
    We write $\O_\Pi(\ell)$ for the sheaf represented by $(\ell, \pm \sqrt\ell)$. Naturally, $\O_\Pi(1)$ is the tautological sheaf whose fiber at the point $W \in \PP_\Pi^n$ is $W^*$, and $\O_\Pi(-1)$ is the dual sheaf with fiber $W$. We observe that all $\O_\Pi(\ell)$ are symmetric powers of these. For $\ell>0$,
    \begin{align*}
        \O_\Pi(\ell) &\cong S^\ell \O_\Pi(1) \\
        \O_\Pi(-\ell) &\cong S^{\ell} \O_\Pi(-1)
    \end{align*}
    and so their fibers at $W$ are $S^\ell W^*$ and $S^{\ell} W$, respectively. Note however that the odd involution must be rescaled appropriately following this operation.

    If an isomeric basis of $\CC^{n+1|n+1}$ is chosen, as must be done when we fix a supertorus $Q(1)^{n+1} \subseteq Q(n+1)$, then the $\O_\Pi(\ell)$ can also be defined in terms of the standard representation $V = \CC^{n+1|n+1}$ of $Q(n+1)$. Fix a closed point $W \in \PP_\Pi^n$, and let $W^\perp$ be its orthogonal complement with respect to the given basis. Then $Q(W) \times Q(W^\perp) \cong Q(1) \times Q(n)$ is a block-diagonal subgroup of $Q(V) \cong Q(n+1)$, and we can ask for the highest weight spaces of $S^\ell W^*$ and $S^\ell W$ for the action of $Q(W)$. For $\ell>0$, these highest weight spaces assemble into the sheaves $\O_\Pi(\ell)$ and $\O_\Pi(-\ell)$, respectively.
\end{remark}

\begin{lemma}\label{lemma:PiPicQGr=0}
    If $1 < r < n-1$, then $\Pic_\Pi(X) = 0$.
\end{lemma}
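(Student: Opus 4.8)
The plan is to mirror the strategy of Lemma \ref{lemma:PicQGr=0}, replacing the test space $\PP_\Pi^2$ by the smallest isomeric supergrassmannian to which the hypotheses apply, namely $\QGr(2,4)$. First I would invoke Lemma \ref{lemma:RestrictionMapFactorsQGr} together with Lemma \ref{lemma:PiPicQGrThick=Z+C} to regard $\Pic_\Pi(X)$ as a subset of $\Pic_\Pi(X/\J^2) \cong \ZZ \oplus \CC$, writing a general class as a pair $(\ell, c)$, where $\ell$ is its image under the surjection to $\Pic(X_\red) = \Pic(\Gr(r,n)) = \ZZ$ and $c$ is its component in $H^1(\Gr(r,n), \Omega^1) = \CC$. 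The goal then becomes to show that the only pair arising from an honest $\Pi$-invertible sheaf on $X$ is $(0,0)$.

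The hypothesis $1 < r < n-1$ is exactly the condition $2 \le r \le n-2$, which is precisely what is needed for a standard extension $\QGr(2,4) \hookrightarrow \QGr(r,n)$ to exist (adjoining a fixed isomeric $(r-2\mid r-2)$-dimensional subspace of $\CC^{n-4\mid n-4}$); this sits in a commutative square with the standard extension $\Gr(2,4) \hookrightarrow \Gr(r,n)$ of the reduced spaces. Pulling back along these maps, I would record the induced map on the ambient $\ZZ \oplus \CC$: a standard extension carries the Plücker $\O(1)$ to the Plücker $\O(1)$, so it is an isomorphism on $\Pic(-_\red) = \ZZ$, and since both $\Gr(2,4)$ and $\Gr(r,n)$ have Picard rank one it is likewise an isomorphism on $H^1(\Omega^1) \cong \CC$; hence the induced map $\ZZ \oplus \CC \to \ZZ \oplus \CC$ is an isomorphism. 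By naturality of the injection $\Pic_\Pi(-) \hookrightarrow \ZZ \oplus \CC$, the pair $(\ell,c)$ attached to a class on $\QGr(r,n)$ agrees (under this isomorphism) with the pair attached to its restriction to $\QGr(2,4)$. Consequently the whole statement reduces to the single base case $\Pic_\Pi(\QGr(2,4)) = 0$.

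For that base case I would compute directly on the $\binom{4}{2}=6$ affine charts of $\QGr(2,4)$, exactly as in Lemma \ref{lemma:PiPicPPi^n=ZcupZ} but now with a full $2\times 2$ block of odd coordinates $\eta_{ij}$ per chart in place of a single $\xi_i$. Writing a candidate transition cocycle as $t^{\ell}\bigl(1 + c(\cdots) + \cdots\bigr)$ modulo $\J^2$ and then imposing compatibility in the higher-order terms, I would track how the $\eta_{ij}$ coming from different rows multiply together under the cocycle condition. This explicit computation is the main obstacle and the real content of the lemma: whereas on $\PP_\Pi^{n}$ the lone odd variable per chart leaves exactly enough room for the relation $c^2=\ell$ to hold nontrivially, on $\QGr(2,4)$ the interaction among several odd coordinates produces further relations that cannot be reconciled with $\ell \neq 0$, so I expect no cocycle to survive unless $\ell = 0$ and hence $c=0$. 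Establishing this incompatibility cleanly — rather than by a brute enumeration of all chart overlaps — is where I anticipate the effort to lie; once it is in hand, the reduction of the second paragraph immediately yields $\Pic_\Pi(\QGr(r,n)) = 0$.
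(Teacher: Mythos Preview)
Your reduction to $\QGr(2,4)$ via a standard extension, together with the identification $\Pic_\Pi(X)\hookrightarrow\Pic_\Pi(X/\J^2)\cong\ZZ\oplus\CC$, is exactly what the paper does. The divergence is in how the base case $\Pic_\Pi(\QGr(2,4))=0$ is handled.

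You propose a direct chart computation over the six affine opens of $\QGr(2,4)$, and you correctly flag that doing this cleanly is the hard part. The paper sidesteps that computation entirely by pushing the standard-extension idea one step further: it chooses two standard extensions $\PP_\Pi^2\hookrightarrow\QGr(2,4)$ whose momentum-map images are \emph{adjacent} triangular $2$-faces of the octahedron $\Delta_{2,4}$. Each such embedding induces an isomorphism on the ambient $\ZZ\oplus\CC$, so Lemma~\ref{lemma:PiPicPPi^n=ZcupZ} forces the restricted class to satisfy $c^2=\ell$. The point is that because the two triangles share an edge, their induced orientations on that edge disagree, so the two identifications $\ZZ\oplus\CC\to\ZZ\oplus\CC$ differ by a sign; one copy gives $c^2=\ell$ and the other gives $(-c)^2=-\ell$, whence $\ell=c=0$. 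This recycles the $\PP_\Pi^2$ computation you already have rather than redoing a harder one from scratch, and it is the germ of the general toric argument in Lemma~\ref{lemma:PiPic=0IfTwoTrianglesMeet}. Your brute-force route would ultimately recover the same pair of constraints, but the paper's geometric packaging makes the mechanism (orientation reversal on a shared edge) transparent. One small caution: your phrase ``$\ell=0$ and hence $c=0$'' is not automatic---you still need a relation tying $c$ to $\ell$ (such as $c^2=\ell$) to kill the continuous part, which is precisely what restricting to a $\PP_\Pi^2$ supplies.
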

\begin{proof}
    We employ the same strategy that was used in Lemma \ref{lemma:PicQGr=0}. It is straightforward to see that a standard extension $\QGr(2,4) \to \QGr(r,n)$ induces an isomorphism on $\Pi$-Picard sets at the level of $X/\J^2$. That is, we have a commutative diagram
    \begin{center}
    \begin{tikzcd}
        \ZZ \oplus \CC & \ZZ \oplus \CC \arrow{l}{\sim} \\
        \Pic_\Pi(\QGr(2,4)) \arrow[hookrightarrow]{u} & \Pic_\Pi(\QGr(r,n)) \arrow[hookrightarrow]{u}, \arrow{l}
    \end{tikzcd}
    \end{center}
    so it suffices to show that $\Pic_\Pi(\QGr(2,4)) = 0$.
    
    Choose two embeddings $\PP_\Pi^2 \to \QGr(2,4)$ whose momentum map images are adjacent triangles in the hypersimplex $\Delta_{2,4}$. As above, these embeddings induce isomorphisms $\Pic_\Pi((\QGr(2,4))_\thick) \to \Pic_\Pi((\PP_\Pi^2)_\thick)$. Since they occur as adjacent triangles, the clockwise directions within the two triangles disagree on their shared edge. (Later, in the proof of Lemma \ref{lemma:PiPic=0IfTwoTrianglesMeet}, we will illustrate this phenomenon more explicitly.) Thus we may assume without loss of generality that these two maps $\ZZ \oplus \CC \to \ZZ \oplus \CC$ are given by $\pm 1$. It follows that an element $(\ell,c) \in \Pic_\Pi(\QGr(2,4)) \subseteq \ZZ \oplus \CC$ must satisfy $c^2 = \ell$ and $(-c)^2 = -\ell$, so indeed $\Pic_\Pi(\QGr(2,4)) = 0$.
\end{proof}

We now accumulate the results of Lemmas \ref{lemma:PiPicQGrThick=Z+C}, \ref{lemma:PiPicPPi^n=ZcupZ}, and \ref{lemma:PiPicQGr=0} in the following proposition. Note in particular that the only nonzero cases are $\PP_\Pi^n$.

\begin{proposition}\label{prop:PiPicQGr}
    Let $0 \leq r \leq n$. Then
    $$\Pic_\Pi(\QGr(r,n)) = \begin{cases}
        \ZZ \oplus \CC & r=1, n=2 \\
        \{(\ell, c) \in \ZZ \oplus \CC \mid c^2=\ell\} &r\in \{1, n-1\}, n>2 \\
        0 & \text{otherwise}
    \end{cases}$$
\end{proposition}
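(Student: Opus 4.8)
The plan is to treat the proposition as an amalgamation and dispatch each value of $r$ in the range $0 \le r \le n$ to the appropriate previously established result, so the only real work is organizing the cases and verifying that the relevant isomorphisms of supergrassmannians bring each case into the form handled by a prior lemma. First I would record the two reductions that collapse the boundary values of $r$: when $r=0$ or $r=n$ the space $\QGr(r,n)$ is a single reduced point, whose $\Pi$-Picard set is trivially $0$; and the isomorphisms $\QGr(1,n) \cong \PP_\Pi^{n-1} \cong \QGr(n-1,n)$ (the latter via the self-duality $\QGr(r,n) \cong \QGr(n-r,n)$) let me treat $r=1$ and $r=n-1$ simultaneously as $\Pi$-projective space.

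With these reductions in hand, the case analysis proceeds as follows. For $r=1,n=2$ the space is $\PP_\Pi^1$, which has odd dimension $1$ and hence satisfies $\J_X^2=0$, so that $X=X/\J^2$; Lemma \ref{lemma:PiPicQGrThick=Z+C} then records directly that $\Pic_\Pi(\PP_\Pi^1)=\ZZ\oplus\CC$. For $r\in\{1,n-1\}$ with $n>2$ the space is $\PP_\Pi^{n-1}$ with $n-1>1$, so Lemma \ref{lemma:PiPicPPi^n=ZcupZ} applies and yields $\{(\ell,c)\in\ZZ\oplus\CC \mid c^2=\ell\}$. Finally, for $1<r<n-1$ (which forces $n\ge 4$) Lemma \ref{lemma:PiPicQGr=0} gives $\Pic_\Pi(X)=0$. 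Since these cases together with $r\in\{0,n\}$ exhaust $0\le r\le n$ disjointly, the stated formula follows.

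The proposition carries essentially no independent difficulty, since the substantive content lives in the three cited lemmas, and in particular in the standard-extension reduction to $\QGr(2,4)$ inside Lemma \ref{lemma:PiPicQGr=0}. The only points that require care, and hence the ``hard part'' of the bookkeeping, are confirming that the case ranges are genuinely exhaustive and non-overlapping (in particular that the $\PP_\Pi^1$ case must be separated out, because $n-1=1$ places it outside the hypothesis $n>1$ of Lemma \ref{lemma:PiPicPPi^n=ZcupZ}), and confirming that the identification $\QGr(n-1,n)\cong\PP_\Pi^{n-1}$ legitimately routes the $r=n-1$ boundary through the $\Pi$-projective-space computation rather than requiring a separate argument.
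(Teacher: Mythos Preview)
Your proposal is correct and matches the paper's approach exactly: the paper presents this proposition simply as the accumulation of Lemmas \ref{lemma:PiPicQGrThick=Z+C}, \ref{lemma:PiPicPPi^n=ZcupZ}, and \ref{lemma:PiPicQGr=0}, with the boundary cases $r\in\{0,n\}$ being points, and your case analysis faithfully reproduces this bookkeeping. The only thing you make explicit that the paper leaves implicit is the identification $\QGr(n-1,n)\cong\PP_\Pi^{n-1}$, which is standard and indeed needed for the $r=n-1$ case.
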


\subsection{$\Pi$-Picard Sets of Toric Subvarieties}

Now let $X$ be the closure of a $Q(1)^n$-orbit in $\QGr(r,n)$, and let $P$ be the image of its underlying variety under the momentum map, a subpolytope of $\Delta_{r,n}$ whose vertices and edges are also vertices and edges of $\Delta_{r,n}$.

\begin{lemma}\label{lemma:RestrictionMapFactorsToric}
    The restriction map $\Pic_\Pi(X) \to \Pic_\Pi(X_\red)$ factors as an injection $\Pic_\Pi(X) \to \Pic_\Pi(X/\J^2)$ followed by a surjection $\Pic_\Pi(X/\J^2) \to \Pic_\Pi(X_\red)$.
\end{lemma}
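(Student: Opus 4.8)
The plan is to reproduce the proof of Lemma \ref{lemma:RestrictionMapFactorsQGr} verbatim in structure, replacing each appeal to the Hodge theory of $\Gr(r,n)$ by the corresponding vanishing statement for the toric variety $X_\red$ that was already invoked in Lemma \ref{lemma:RestrictionMapInjectiveToric}. The factorization itself comes for free from the surjections $\O_X \to \O_X/\J_X^2 \to \O_X/\J_X$; everything reduces to showing that the first induced map on $\Pi$-Picard sets is injective and the second is surjective.

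For injectivity of $\Pic_\Pi(X) \to \Pic_\Pi(X/\J^2)$, I would filter the kernel sheaf $1 + \J_X^2$ of $\O_X^\times \to (\O_X/\J_X^2)^\times$ by the subsheaves $1 + \J_X^m$ for $m \geq 2$. Each successive quotient $(1+\J_X^m)/(1+\J_X^{m+1})$ is isomorphic, via $1 + f \mapsto f$, to $\J_X^m/\J_X^{m+1} \cong \Omega^m_{X_\red}$, using that $\F_X \cong \Omega^1_{X_\red}$ in the toric setting. Since Danilov's vanishing (as used in Lemma \ref{lemma:RestrictionMapInjectiveToric}) gives $H^1(X_\red, \Omega^m) = 0$ for every $m > 1$, walking up the filtration kills the kernel of the induced map on $H^1$, exactly as in Lemma \ref{lemma:RestrictionMapInjectiveQGr}.

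For surjectivity of $\Pic_\Pi(X/\J^2) \to \Pic_\Pi(X_\red)$, I would use the short exact sequence
$$0 \to \Omega^1_{X_\red} \to (\O_X/\J_X^2)^\times \to (\O_X/\J_X)^\times \to 0,$$
with first map $f \mapsto 1 + f$, whose associated long exact sequence reads
$$\Pic_\Pi(X/\J^2) \to \Pic_\Pi(X_\red) \to H^2(X_\red, \Omega^1).$$
Surjectivity therefore follows once $H^2(X_\red, \Omega^1) = 0$, the toric analog of the vanishing $H^2(\Gr(r,n), \Omega^1) = 0$ used for the supergrassmannian. The needed surjectivity of $H^0((\O_X/\J_X^2)^\times) \to H^0((\O_X/\J_X)^\times)$ holds because on a complete variety global units are constants, which lift trivially.

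The main obstacle is establishing $H^2(X_\red, \Omega^1) = 0$. For $X_\red = \Gr(r,n)$ this was immediate from the Hodge numbers being concentrated on the diagonal, but here $X_\red$ is only a normal, possibly singular, toric variety, so I cannot appeal to smooth Hodge theory directly. I expect to obtain it from the same circle of results of Danilov on the cohomology of the sheaf of (Zariski) differentials on complete toric varieties, which give the off-diagonal vanishing $H^q(X_\red, \Omega^p) = 0$ for $p \neq q$; the delicate point is confirming that this holds in the required generality for the orbit closures arising in $\QGr(r,n)$, and that the sheaf appearing in the filtration is precisely the one to which Danilov's theorem applies.
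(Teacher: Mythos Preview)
Your injectivity argument is exactly the paper's: filter by powers of $\J_X$, identify the graded pieces with $\Omega^m_{X_\red}$, and invoke Danilov's vanishing $H^1(X_\red,\Omega^m)=0$ for $m>1$, just as in Lemma~\ref{lemma:RestrictionMapInjectiveToric}.

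The surjectivity half, however, is where your proposal diverges from the paper and runs into the very obstacle you flag. The paper explicitly states that, unlike the Grassmannian case, it is \emph{not} necessarily true that $H^2(X_\red,\Omega^1)=0$ for these toric orbit closures, and therefore abandons the long exact sequence route. Your hope that Danilov supplies the off-diagonal vanishing $H^q(\Omega^p)=0$ for all $p\neq q$ is only valid for \emph{simplicial} complete toric varieties; the orbit closures in $\QGr(r,n)$ can be non-simplicial (already the generic orbit in $\QGr(2,4)$ has the octahedron as polytope, whose normal fan is not simplicial). The Danilov result cited in Lemma~\ref{lemma:RestrictionMapInjectiveToric} gives $H^1(\Omega^q)=0$ for $q>1$, but there is no Hodge symmetry in the singular setting to convert this into $H^2(\Omega^1)=0$. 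So the step you call ``delicate'' is in fact the place where the argument breaks.

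The paper's replacement is short and avoids cohomology entirely: since $\J_X/\J_X^2$ is purely odd, the even part of $\O_X/\J_X^2$ is exactly $\O_{X_\red}$, so the inclusion of the even part furnishes a ring-sheaf splitting of $\O_X/\J_X^2 \twoheadrightarrow \O_{X_\red}$. Passing to units and then to $H^1$, the map $\Pic_\Pi(X/\J^2)\to\Pic_\Pi(X_\red)$ is split surjective. This argument would have worked equally well in Lemma~\ref{lemma:RestrictionMapFactorsQGr}; it is simply more robust than the cohomological one, and is what you should use here.
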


\begin{proof}
    As in Lemma \ref{lemma:RestrictionMapFactorsQGr}, injectivity of $\Pic_\Pi(X) \to \Pic_\Pi(X/\J^2)$ is the same as in Lemma \ref{lemma:RestrictionMapInjectiveToric}. However, unlike in Lemma \ref{lemma:RestrictionMapFactorsQGr}, it is not necessarily the case that $H^2(X, \Omega^1)=0$. Instead, we employ a slightly more manual argument to show that $\Pic_\Pi(X/\J^2) \to \Pic_\Pi(X_\red)$ is surjective.

    Since the even part of $\O_X/\J_X^2$ is exactly $\O_{X_\red}$, there is a splitting $X/\J^2 \to X_\red$ of the closed immersion $X_\red \to X/\J^2$. Since the composition of these maps is the identity on $X_\red$, the same is true for the corresponding maps on $\Pic_\Pi(X_\red)$.
\end{proof}

As before, this implies that $\Pic_\Pi(X/\J^2) \cong \Pic(X_\red) \oplus H^1(\Omega^1)$. Of course, each transition function along an edge of $P$ consists of an integer and a complex number. These two pieces of data are intimately linked; indeed, one should expect that the continuous part of $\Pic_\Pi(X/\J^2)$ has the same rank as $\Pic(X_\red)$. It is therefore no coincidence that $\rk \Pic(X_\red) = \dim H^1(\Omega^1)$. See e.g.\ \cite{CLS} for the case of $P$ a simple polytope. 

\begin{lemma}
    It holds that $\Pic_\Pi(X/\J^2) \cong (\ZZ \oplus \CC)^{\rk \Pic(X_\red)}$.
\end{lemma}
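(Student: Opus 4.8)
The plan is to build on the preceding lemma, which already establishes the short exact sequence
\[
0 \to H^1(X_\red, \Omega^1) \to \Pic_\Pi(X/\J^2) \to \Pic(X_\red) \to 0,
\]
together with the fact (from Lemma \ref{lemma:RestrictionMapFactorsToric}) that this sequence is split by the retraction $X/\J^2 \to X_\red$. Consequently $\Pic_\Pi(X/\J^2) \cong \Pic(X_\red) \oplus H^1(X_\red, \Omega^1)$ as sets, so the entire statement reduces to two numerical facts about the underlying toric variety $X_\red$: first, that $\Pic(X_\red)$ is free abelian of some rank, and second, that $\dim_\CC H^1(X_\red, \Omega^1)$ equals that same rank. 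Granting these, we would have $\Pic_\Pi(X/\J^2) \cong \ZZ^{\rk \Pic(X_\red)} \oplus \CC^{\rk \Pic(X_\red)} \cong (\ZZ \oplus \CC)^{\rk \Pic(X_\red)}$, which is precisely the claim.

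First I would record that $X_\red$ is a complete toric variety (its fan $\Sigma_P$ is the complete fan dual to the polytope $P$), so that $\Pic(X_\red)$ is a finitely generated free abelian group; its rank is the combinatorial quantity described in the introduction via \cite{Ewald}. The substance of the proof is therefore the identification $\dim_\CC H^1(X_\red, \Omega^1) = \rk \Pic(X_\red)$. For a complete toric variety this is a standard fact: the group $\Pic(X_\red) \otimes_\ZZ \CC$ injects into $H^2(X_\red, \CC)$, which for a complete toric variety carries a pure Hodge structure concentrated in bidegree $(1,1)$, so that $H^2(X_\red,\CC) \cong H^1(X_\red, \Omega^1)$ and moreover the cycle class map $\Pic(X_\red) \otimes \CC \to H^1(X_\red, \Omega^1)$ is an isomorphism. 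I would cite \cite{CLS} for this comparison, as the excerpt already does for the simple case, remarking that the simplicity hypothesis is unnecessary for the equality of dimensions. This gives the numerical identity $\dim_\CC H^1(X_\red, \Omega^1) = \rk(\Pic(X_\red) \otimes_\ZZ \CC) = \rk \Pic(X_\red)$, since $\Pic(X_\red)$ is torsion-free.

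The main obstacle is ensuring that the Hodge-theoretic statement is available in the generality needed here: $X_\red$ need not be smooth or even simplicial, since the polytope $P$ can have arbitrary combinatorial type among those realizable inside $\Delta_{r,n}$. The cleanest route is to invoke the result of Danilov already used in Lemma \ref{lemma:RestrictionMapInjectiveToric} (\cite[Corollary 12.7]{Danilov}), which computes the Hodge numbers $h^{p,q}(X_\red) = \dim H^q(X_\red, \Omega^p_{X_\red})$ of a complete toric variety combinatorially and in particular shows $h^{p,q} = 0$ for $p \neq q$ and identifies $h^{1,1}$ with the Picard rank. Alternatively, for a complete \emph{simplicial} toric variety one has the clean Danilov--Jurkiewicz presentation, but since $X_\red$ may fail to be simplicial I would lean on Danilov's general computation rather than the simplicial cohomology ring. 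Assembling these ingredients, the splitting from Lemma \ref{lemma:RestrictionMapFactorsToric} upgrades the equality of ranks to the claimed set-theoretic isomorphism $\Pic_\Pi(X/\J^2) \cong (\ZZ \oplus \CC)^{\rk \Pic(X_\red)}$.
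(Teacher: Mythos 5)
Your proposal is correct and takes essentially the same route as the paper, which states this lemma without a separate proof: the paper's justification is exactly the preceding discussion, namely the splitting $\Pic_\Pi(X/\J^2) \cong \Pic(X_\red) \oplus H^1(\Omega^1)$ coming from the retraction in Lemma \ref{lemma:RestrictionMapFactorsToric}, combined with the numerical equality $\rk \Pic(X_\red) = \dim_\CC H^1(\Omega^1)$ and freeness of $\Pic(X_\red)$, with \cite{CLS} cited for simple $P$. Your only departure is to spell out why that equality persists when $P$ is not simple, by appealing to Danilov's computation of Hodge numbers of complete toric varieties (the same \cite[Corollary 12.7]{Danilov} already used in Lemma \ref{lemma:RestrictionMapInjectiveToric}) together with the cycle class isomorphism $\Pic(X_\red)\otimes\CC \cong H^2(X_\red,\CC)$; this fills in a step the paper leaves implicit rather than constituting a different proof.
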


We record the following fact about 2-faces of $P$.

\begin{lemma}\label{lemma:FaceIsTriangleOrSquare}
    A 2-face of $P$ is either a triangle or a square.
\end{lemma}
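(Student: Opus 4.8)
The plan is to argue purely combinatorially, using only that the vertices of $P$ are vertices of the hypersimplex $\Delta_{r,n}$ (hence $0/1$ vectors with exactly $r$ ones) and that its edges are edges of $\Delta_{r,n}$ (hence parallel to some $e_i - e_j$, the only integer vectors changing exactly two coordinates and preserving the coordinate sum $r$). Let $F$ be a $2$-face of $P$ and fix a vertex $v$ of $F$. Since $F$ is a convex polygon, $v$ lies on exactly two edges of $F$, with primitive direction vectors $e_{j_1} - e_{i_1}$ and $e_{j_2} - e_{i_2}$; as $v$ is a vertex of $\Delta_{r,n}$ carrying a $1$ in each removed coordinate and a $0$ in each added coordinate, we have $v_{i_1} = v_{i_2} = 1$ and $v_{j_1} = v_{j_2} = 0$.

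First I would rule out the index coincidences incompatible with these values. Linear independence of the two edge directions forces $\{i_1, j_1\} \neq \{i_2, j_2\}$. A coincidence $i_1 = j_2$ would require $v_{i_1} = 1$ and $v_{j_2} = 0$ at the same coordinate, a contradiction; likewise $j_1 = i_2$ is impossible. Hence either all four indices $i_1, j_1, i_2, j_2$ are distinct, or exactly one of the two compatible coincidences $i_1 = i_2$ (the two edges vacate the same $1$) or $j_1 = j_2$ (they fill the same $0$) occurs.

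The next step is to enumerate the $0/1$ vectors with $r$ ones lying in the affine hull $v + \Span(e_{j_1}-e_{i_1},\, e_{j_2}-e_{i_2})$ of $F$, since every vertex of $F$ is such a vector. Writing a point as $v + s(e_{j_1}-e_{i_1}) + t(e_{j_2}-e_{i_2})$ and requiring each coordinate to stay in $\{0,1\}$, a direct check gives: in the distinct-index case the admissible pairs are $(s,t) \in \{(0,0),(1,0),(0,1),(1,1)\}$, four vertices forming a square; in the single-coincidence case the constraint $s+t \in \{0,1\}$ coming from the shared coordinate eliminates $(1,1)$, leaving three vertices forming a triangle.

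Finally I would confirm that $F$ is exactly the convex hull of these points. In the triangle case there are only three admissible lattice points, so nothing more is needed. In the square case, the two neighbors $v + (e_{j_1}-e_{i_1})$ and $v + (e_{j_2}-e_{i_2})$ differ in four coordinates, so they are joined neither by an edge of $\Delta_{r,n}$ nor by an edge of $P$; being non-adjacent vertices of the polygon $F$, they force $F$ to contain a further vertex, which can only be the remaining admissible point $v + (e_{j_1}-e_{i_1}) + (e_{j_2}-e_{i_2})$, so $F$ is the full square. The point requiring care, and the main obstacle, is precisely this adjacency argument: one must rule out that $F$ ``cuts the corner'' and remains a triangle on three of the four square vertices, which is excluded exactly because the missing diagonal is not an edge direction of the hypersimplex.
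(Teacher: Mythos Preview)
Your proof is correct and essentially carries out in detail the ``straightforward exercise'' that the paper leaves to the reader. The paper's one-line argument asserts that any set of more than three coplanar vertices of $\Delta_{r,n}$ must be the vertices of a square; you make this explicit by parameterizing the affine plane of $F$ via the two hypersimplex edge directions at a chosen vertex and then enumerating the admissible $0/1$ points. The only genuine addition in your write-up is the final ``cutting the corner'' step, which is not strictly required for the lemma as stated (three of the four square vertices still form a triangle) but does confirm that the edge hypothesis on $P$ forces the full square to appear---a point the paper tacitly relies on later when analyzing how triangular faces meet.
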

\begin{proof}
    It is a straightforward exercise to verify that any set of more than three coplanar vertices in $\Delta_{r,n}$ are the vertices of a square.
\end{proof}

\begin{lemma}\label{lemma:PiPic=0IfTwoTrianglesMeet}
    If two triangular faces in $P$ meet at an edge and are not part of a larger simplicial face, then any $\Pi$-invertible sheaf trivializes on the union of the corresponding cells of $X$.
\end{lemma}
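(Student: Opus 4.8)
The plan is to pass to the union $Y = X_1 \cup_E X_2$ of the two triangular cells, where $X_1, X_2 \cong \PP_\Pi^2$ are the toric cells of the two triangular faces and $E \cong \PP_\Pi^1$ is the cell of the shared edge, and to show that the only $\Pi$-invertible sheaf restricting compatibly to both $X_1$ and $X_2$ is the trivial one. By Lemma \ref{lemma:RestrictionMapFactorsToric} I may work at the level of $X/\J^2$, where the shared edge carries $\Pic_\Pi(E/\J^2) \cong \ZZ \oplus \CC$ by Lemma \ref{lemma:PiPicQGrThick=Z+C}; write $(\delta, \gamma)$ for the degree and odd parameter that a given $\Pi$-invertible sheaf $\L$ induces on $E$ in a fixed coordinate chart of $E$. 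A $\Pi$-invertible sheaf on $Y$ is determined by its restrictions to $X_1$ and $X_2$ together with the gluing on $E$, so it suffices to analyze these restrictions.

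Each $X_i \cong \PP_\Pi^2$, so by Lemma \ref{lemma:PiPicPPi^n=ZcupZ} the restriction $\L|_{X_i}$ corresponds to a pair $(\ell_i, c_i)$ with $c_i^2 = \ell_i$. Restricting further to $E_\red \cong \PP^1$, the degree is intrinsic (the hyperplane class of each $(X_i)_\red \cong \PP^2$ pulls back to $\O(1)$ on the line $E_\red$), so $\ell_1 = \ell_2 = \delta$ and $c_1^2 = c_2^2 = \gamma^2$. The entire question therefore reduces to the \emph{sign} with which $\delta$ enters the super-extension condition on each side: that is, whether extending $\L|_{X_i}$ from $X_i/\J^2$ to $X_i$ forces $\gamma^2 = +\delta$ or $\gamma^2 = -\delta$.

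The key step, and the step I expect to be the main obstacle, is to compute from each triangle the regular even coordinate along $E$ and to check that the sign of its $\xi_a\xi_b$-term is opposite on the two sides. As in the coordinate computation underlying Lemmas \ref{lemma:PicQGr=0} and \ref{lemma:PiPicPPi^n=ZcupZ}, this coordinate has the form $t_a t_b^{-1}(1 - \xi_a\xi_b)$ or $t_a t_b^{-1}(1 + \xi_a\xi_b)$ according to the cyclic orientation the triangle induces on $E$; expanding $(t_a t_b^{-1})^{\delta}$ in terms of it shows that the vanishing of the $\xi_a\xi_b$-coefficient of the transition function $(t_a t_b^{-1})^{\delta}\big(1 + \gamma(\xi_a - \xi_b) - \gamma^2\xi_a\xi_b\big)$ becomes $\gamma^2 = \delta$ in the first case and $\gamma^2 = -\delta$ in the second. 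I would then verify that two triangular faces not contained in a common simplex induce \emph{opposite} orientations on their shared edge -- the explicit form of the ``clockwise directions disagree'' observation invoked in Lemma \ref{lemma:PiPicQGr=0}. The hypothesis that the triangles are not part of a larger simplicial face is exactly what rules out the alternative: when they are two faces of a common $\PP_\Pi^d$-cell, both inherit the coordinate, and hence the sign, of a single ambient chart, the two conditions agree, and a nontrivial $\O_\Pi(\ell)$ survives. I expect the cleanest way to organize the case analysis is to note that a same-type pair of triangles (both adjoining a direction, or both omitting one) always completes to such a common simplex by transitivity of the parallelism of edge directions, so that the surviving configuration is precisely the one with opposite induced signs.

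Granting the sign reversal $\epsilon_1 = -\epsilon_2$, the conclusion is immediate: gluing $\L|_{X_1}$ to $\L|_{X_2}$ along $E$ forces the intrinsic data $(\delta,\gamma)$ to satisfy both $\gamma^2 = \delta$ and $\gamma^2 = -\delta$, whence $\delta = 0$ and $\gamma = 0$. Thus $\L|_{X_1}$ and $\L|_{X_2}$ are each trivial and glue to the distinguished point of $\Pic_\Pi(Y)$, so every $\Pi$-invertible sheaf on $X$ trivializes on the union of the two cells.
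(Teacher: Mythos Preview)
Your proposal is correct and follows essentially the same route as the paper. The paper simply asserts that without loss of generality the two triangles sit at the vertices $1100,1010,0110,0011$ of $\Delta_{2,4}$ and then carries out the explicit transition-function computation (your $\gamma^2=\delta$ versus $\gamma^2=-\delta$), concluding $\ell=c=0$. Your framing in terms of the induced orientation on the shared edge, together with the observation that a same-type pair of triangles completes to a tetrahedron (hence is excluded by hypothesis), is exactly what justifies the paper's ``WLOG'' and makes the sign reversal transparent; the underlying computation is identical.
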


\begin{proof}
    The same proof as in Lemma \ref{lemma:PiPicQGr=0} works here as well. Since the transition functions are nicer on toric supervarieties than on $\QGr(r,n)$, we elect to include some more explicit computations in Figure \ref{fig:TwoTriangles}.
    
    Without loss of generality, two such triangles occur at the vertices $1100$, $1010$, $0110$, and $0011$, and transition functions between vertices are as pictured in Figure \ref{fig:TwoTriangles}, for some $n \in \ZZ$ and $a \in \CC$. In the simplest situation, these transition functions arise by considering the closure of a $Q(1)^4$ orbit of a generic point in $\QGr(2,4)$.
    
    The vertical downward arrow must be $(t_1t_2^{-1})^\ell(1+c(\xi_1-\xi_2) - \ell\xi_1\xi_2)$. If this equals the product of the two transition functions on the left, then $c^2=\ell$. However, if it equals the product of those on the right, then $c^2=-\ell$. Therefore $\ell=c=0$, so the $\Pi$-invertible sheaf trivializes over these cells.
\end{proof}

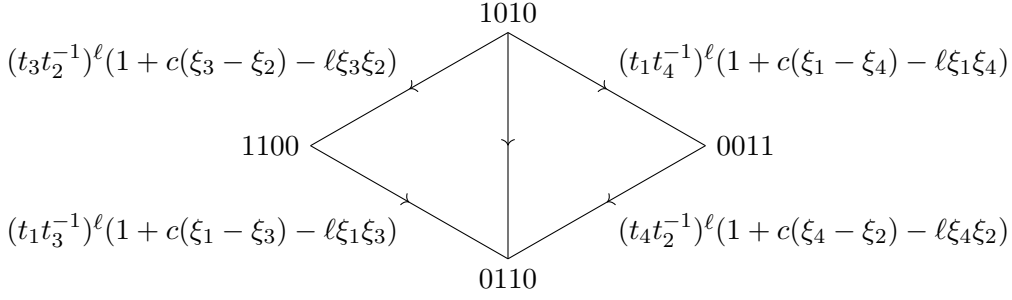
\begin{figure}
    \centering
    \begin{tikzpicture}[decoration={
    markings,
    mark=at position 0.5 with {\arrow{>}}}
    ] 
    
    \def\L{3}
    
    \coordinate (A) at (0,0);
    \coordinate (B) at (0,\L);
    
    \coordinate (C) at ($ (A)!0.5!(B) + ({sqrt(3)/2*\L},0) $);
    
    \coordinate (D) at ($ (A)!0.5!(B) - ({sqrt(3)/2*\L},0) $);
    
    \draw[postaction={decorate}] (B) -- (A);
    \draw[postaction={decorate}] (B) -- (C) node[midway, above right] {\small $(t_1t_4^{-1})^\ell(1+c(\xi_1-\xi_4) - \ell\xi_1\xi_4)$};
    \draw[postaction={decorate}] (C) -- (A) node[midway, below right] {\small $(t_4t_2^{-1})^\ell(1+c(\xi_4-\xi_2) - \ell\xi_4\xi_2)$};
    
    \draw[postaction={decorate}] (B) -- (D) node[midway, above left] {\small $(t_3t_2^{-1})^\ell (1+c(\xi_3-\xi_2) -\ell \xi_3 \xi_2)$};
    \draw[postaction={decorate}] (D) -- (A) node[midway, below left] {\small $(t_1t_3^{-1})^\ell(1+c(\xi_1-\xi_3) - \ell\xi_1\xi_3)$};
    
    
    \node[below] at (A) {\small $0110$};
    \node[above] at (B) {\small $1010$};
    \node[right] at (C) {\small $0011$};
    \node[left] at (D) {\small $1100$};
    
    \end{tikzpicture}

    \caption{Incompatible transition functions (unless $\ell=c=0$)}
    \label{fig:TwoTriangles}
\end{figure}

\begin{lemma}\label{lemma:PiPic=0IfNoSimplexFactor}
    If $P$ is not the product of a smaller polytope with a simplex, then $\Pic_\Pi(X)=0$.
\end{lemma}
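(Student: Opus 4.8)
The plan is to argue by contrapositive: assuming a nontrivial $\Pi$-invertible sheaf on $X$, I will exhibit a simplex factor of $P$. By Lemma \ref{lemma:RestrictionMapFactorsToric} any $\Pi$-invertible sheaf injects into $\Pic_\Pi(X/\J^2)$, so (using the preceding computation of $\Pic_\Pi(X/\J^2)$) it is recorded by transition functions along the edges of $P$, each of the form $g_e = (t_it_j^{-1})^{\ell_e}\bigl(1 + c_e(\xi_i - \xi_j) - \ell_e \xi_i\xi_j\bigr)$ when $e$ is parallel to $x_i^* - x_j^*$, with $(\ell_e, c_e) \in \ZZ \oplus \CC$, subject to the cocycle condition around every $2$-face. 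By Lemma \ref{lemma:FaceIsTriangleOrSquare} each such face is a triangle or a square, so first I would record the two resulting local relations. Around a square whose two pairs of parallel edges lie in the disjoint directions $\{i,j\}$ and $\{k,l\}$, the cocycle splits into its $\xi_i,\xi_j$ part and its $\xi_k,\xi_l$ part; each forces the data on the two parallel edges to coincide, with no cross-condition. Around a triangle on indices $\{i,j,k\}$, the computation already carried out for $\PP_\Pi^2$ preceding Lemma \ref{lemma:PiPicPPi^n=ZcupZ} shows that the three edges carry a common datum $(\ell,c)$ (cyclically oriented) satisfying $c^2 = \ell$.

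With these relations in hand, I would let $D$ be the set of edge-directions carrying nonzero data and split into cases for a chosen direction $\{i,j\} \in D$. If every $2$-face meeting an $\{i,j\}$-edge is a square, then by the square relation the datum on $\{i,j\}$ is transported unchanged and is never constrained by a $c^2 = \ell$ relation, so $\{i,j\}$ behaves exactly as the single direction of a $\PP_\Pi^1$; the goal is then to see that this direction splits off a $\PP_\Pi^1$ (one-simplex) factor. If instead some $2$-face on $\{i,j\}$ is a triangle $\{i,j,k\}$, the triangle relation puts $\{j,k\}$ and $\{i,k\}$ into $D$ as well with the same nonzero $(\ell,c)$. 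Here the crucial input is Lemma \ref{lemma:PiPic=0IfTwoTrianglesMeet}: were two triangles to share an edge without lying in a common simplicial face, the datum would be forced to vanish. Since the datum is nonzero, all triangles through these $D$-edges must fuse into a single simplex $\Delta^{d}$ on an index set $S \supseteq \{i,j,k\}$, i.e.\ a copy of $\PP_\Pi^d$ with $d \ge 2$.

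It then remains to promote these two combinatorial pictures to an honest Cartesian splitting $P \cong Q \times \Delta^{d}$, and this is where I expect the real work to lie. The point to establish is that a maximal family of triangle faces fusing into a simplex on $S$ (respectively a square-only direction $\{i,j\}$) forces the vertex set of $P$ to factor as a product of the $S$-selection with the selection on the complementary indices. I would do this using that the edges of $P$ are hypersimplex edges $e_a - e_b$ and that any interaction of an $S$-direction with an outside direction occurs through a $2$-face: a non-product interaction would produce either a triangle sharing an edge with the simplex but not contained in it (trivializing the datum by Lemma \ref{lemma:PiPic=0IfTwoTrianglesMeet}) or a square binding the two selections together, neither of which can occur while the datum stays nonzero. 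Carrying out this vertex-set bookkeeping carefully — in particular verifying that ``no bad triangle and no binding square'' genuinely implies the Cartesian product — is the main obstacle, and connecting it back to Proposition \ref{prop:PicTSV} is what will justify the assertion in Remark \ref{rem:PicTSV} that each parallel component is a $\PP_\Pi^1$ factor.

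In either case $P$ acquires a simplex factor of dimension $\ge 1$, contradicting the hypothesis; hence no nontrivial $\Pi$-invertible sheaf exists and $\Pic_\Pi(X) = 0$.
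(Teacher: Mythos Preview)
Your proposal is correct and follows essentially the same route as the paper: argue by contrapositive, use Lemma~\ref{lemma:FaceIsTriangleOrSquare} to reduce to triangle and square $2$-faces, and invoke Lemma~\ref{lemma:PiPic=0IfTwoTrianglesMeet} to force all edges leaving a maximal simplicial face carrying nontrivial data to lie in squares, whence $P$ splits off that simplex. The paper's version is considerably terser---it jumps directly from ``all other edges incident to the vertices of $S$ must be parts of squares'' to ``$P$ consists of parallel translates of $S$''---so your explicit identification of the Cartesian-splitting bookkeeping as the place where care is needed is well taken, even if the paper does not spell it out.
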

\begin{proof}
    We prove the contrapositive. Let $S \subseteq P$ be a maximal simplicial face, and assume that there is a $\Pi$-invertible sheaf that does not trivialize over $S$. By Lemmas \ref{lemma:FaceIsTriangleOrSquare} and \ref{lemma:PiPic=0IfTwoTrianglesMeet}, all other edges incident to the vertices of $S$ must be parts of squares. In particular, $P$ consists of parallel translates of $S$, connected via parallel edges. That is, $P$ is the product of $S$ with some other polytope.
\end{proof}

From this lemma, it is clear that any nonzero contribution to $\Pic_\Pi(X)$ will occur as a result of simplex factors in $P$, or equivalently $\PP_\Pi^n$ factors in $X$. Let $I$ be the collection of these factors, and for $i \in I$ write $d_i$ for the dimension of the factor $\PP^{d_i}_\Pi$. Then $$\Pic_\Pi(X) \subseteq (\ZZ \oplus \CC)^{|I|} \subseteq (\ZZ \oplus \CC)^{\rk \Pic(X_\red)},$$ a fact that we will use in the following proposition.

\begin{proposition}\label{prop:PiPicTSV}
    It holds that $$\Pic_\Pi(X) = \{(\ell_i, c_i) \in (\ZZ \oplus \CC)^{|I|} \mid \text{at most one } c_i \neq 0, \text{and if } d_i>1, \text{then } c_i^2=\ell_i\}.$$
\end{proposition}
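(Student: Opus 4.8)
The plan is to prove the two inclusions separately, organizing everything around the product decomposition $X \cong \prod_{i \in I} \PP_\Pi^{d_i} \times Y$ underlying Lemma \ref{lemma:PiPic=0IfNoSimplexFactor}, where $Y$ carries no simplex factor, together with the containment $\Pic_\Pi(X) \subseteq (\ZZ \oplus \CC)^{|I|}$ already established. Writing a class as a tuple $(\ell_i, c_i)$, the integer $\ell_i$ records the degree of the transition functions along edges of the $i$-th simplex factor $S_i$ and the scalar $c_i$ records the coefficient of the linear odd term, exactly as in Lemma \ref{lemma:PiPicQGrThick=Z+C}. Since $Y$ has no $\PP_\Pi^1$ factor, it contributes nothing to either the integer or the continuous part, so only the factors in $I$ matter.

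For the forward inclusion I would first dispatch the per-factor constraint (b). Restricting a $\Pi$-invertible sheaf on $X$ along the inclusion of a single factor $\PP_\Pi^{d_i} \hookrightarrow X$ yields a $\Pi$-invertible sheaf on $\PP_\Pi^{d_i}$ whose class is $(\ell_i, c_i)$; Lemma \ref{lemma:PiPicPPi^n=ZcupZ} then forces $c_i^2 = \ell_i$ whenever $d_i > 1$. The substantive content is condition (a), that at most one $c_i$ is nonzero, and this is the \emph{main obstacle}. To establish it I would argue as in Lemma \ref{lemma:PiPic=0IfTwoTrianglesMeet}, by an explicit transition-function computation, but now over a square $2$-face mixing two distinct simplex factors. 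Given $i \neq j$ in $I$, choose an edge $e_i$ of $S_i$ and an edge $e_j$ of $S_j$; their product is a $2$-face of $P$ which, having four vertices, is a square by Lemma \ref{lemma:FaceIsTriangleOrSquare}, and the class determines a $Q(1)$-valued transition around its four vertices.

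The key point is that the $Q(1)$-valued transitions in the $e_i$-direction and the $e_j$-direction do not commute: writing them as $\alpha + \beta_i \epsilon$ and $\alpha' + \beta_j \epsilon$, the commutator has even part a nonzero multiple of $\beta_i \beta_j$, hence proportional to $c_i c_j\, \xi_{(i)} \xi_{(j)}$, where $\xi_{(i)}, \xi_{(j)}$ are the independent odd coordinates attached to the two factors. The cocycle condition around the square requires that moving in the $e_i$-direction and then the $e_j$-direction agree with the opposite order, so this uncancellable cross term must vanish; thus $c_i c_j = 0$, and as $i, j$ were arbitrary, at most one $c_i$ is nonzero. I expect the careful bookkeeping of how the local odd coordinates transform between the four charts — so that the surviving $\xi_{(i)} \xi_{(j)}$ term is correctly identified, rather than spuriously cancelling as it would if the coordinates were globally constant — to be the fiddly part. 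This is precisely the phenomenon noted in the introduction, that the summands of a tensor product of two $\Pi$-invertible sheaves cannot in general be equipped with compatible odd involutions.

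For the reverse inclusion I would exhibit each admissible tuple as the class of an explicit sheaf. If some $c_{i_0} \neq 0$ (necessarily unique), realize the $i_0$-th factor by $\O_\Pi(\ell_{i_0})$ when $d_{i_0} > 1$, choosing the odd involution so that $c_{i_0} = \pm \sqrt{\ell_{i_0}}$ as in Lemma \ref{lemma:PiPicPPi^n=ZcupZ} and Remark \ref{rem:PiPicQGr}, or by the $\Pi$-invertible sheaf of class $(\ell_{i_0}, c_{i_0})$ on $\PP_\Pi^1$ when $d_{i_0} = 1$, using $\Pic_\Pi(\PP_\Pi^1) = \ZZ \oplus \CC$. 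Each remaining factor has $c_i = 0$; condition (b) then forces $\ell_i = 0$ on the higher-dimensional factors, while on a $\PP_\Pi^1$ factor we take the ordinary invertible sheaf $\O(\ell_i)$, with $\Pic(\PP_\Pi^1) = \ZZ$, regarded as $\Pi$-invertible via $\O(\ell_i) \oplus \Pi\O(\ell_i)$. Pulling these back to $X$ and tensoring keeps the rank at $(1|1)$, since at most one genuinely $\Pi$-twisted factor is involved and tensoring a rank-$(1|1)$ sheaf with invertible sheaves preserves both the rank and the odd involution; the result is a $\Pi$-invertible sheaf on $X$ of the prescribed class, matching Remarks \ref{rem:PiPicQGr} and \ref{rem:PiPicTSV} and completing the proof.
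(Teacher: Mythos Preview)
Your proposal is correct and follows essentially the same approach as the paper: the $c_i^2=\ell_i$ constraint comes from restricting to each $\PP_\Pi^{d_i}$ factor via Lemma~\ref{lemma:PiPicPPi^n=ZcupZ}, and the ``at most one $c_i\neq 0$'' constraint comes from the cocycle condition $f_i f_j f_i^{-1} f_j^{-1}=1$ around the square formed by an edge from each of two distinct simplex factors, forcing the transition functions to commute and hence $c_ic_j=0$. Your write-up is considerably more detailed than the paper's (which dispatches the whole thing in three sentences and leaves the reverse inclusion to the surrounding remarks), but the underlying argument is the same.
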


\begin{proof}
    The $c_i^2=\ell_i$ conditions come from Lemma \ref{lemma:PiPicPPi^n=ZcupZ}. Now for any two distinct simplex factors, say $1, 2 \in I$, choose a transition function $f_1, f_2$ along an edge of each. Then $f_1f_2 f_1^{-1} f_2^{-1} =1$, so $f_1$ and $f_2$ commute. It follows that at least one of them must be purely even, so either $c_1=0$ or $c_2=0$.
\end{proof}

\begin{remark}\label{rem:PiPicTSV}
    Observe that only one factor of $\PP_\Pi^n$ for $n>1$ can contribute nonzero transition functions to any particular $\Pi$-invertible sheaf. That is, the $\Pi$-Picard set of a product of such $\PP_\Pi^n$ is the coproduct of their $\Pi$-Picard sets, in the category of pointed sets.

    Consequently, a $\Pi$-invertible sheaf on $X \cong \prod_{i=1}^\infty (\PP_\Pi^i)^{d_i} \times Y$ for $d_i$ maximal is given by the tensor product of a $\Pi$-invertible sheaf on a single factor of $\PP_\Pi^i$ as determined in Remark \ref{rem:PiPicQGr} with an invertible sheaf on $(\PP_\Pi^1)^{d_1}$ as determined in Remark \ref{rem:PicTSV}.
\end{remark}

Previously, we indicated that the result of Proposition \ref{prop:PicTSV} can be generalized to any toric supervariety whose polytope's edges are parallel to $x_i^*$ or $x_i^* - x_j^*$, such as in Example \ref{ex:PicPentagon} and the corresponding Figure \ref{fig:pentagon}. However, analogously generalizing the result on $\Pi$-invertible sheaves is not quite so straightforward.

\begin{example}
    Let $X$ be the toric supervariety of Example \ref{ex:PicPentagon}. The transition functions of a $\Pi$-invertible sheaf on $X$ are specified in Figure \ref{fig:pentagonPicPi} such that
    \begin{align*}
    f &= t_2^{\ell_4}(1+c_4\xi_2) t_1^{\ell_3}(1+c_3\xi_1) t_2^{\ell_2}(1+c_2\xi_2) t_1^{\ell_1}(1+c_1\xi_1) \\
    &= t_1^{\ell_1+\ell_3}t_2^{\ell_2+\ell_4} (1 + (c_1+c_3)\xi_1 + (c_2+c_4)\xi_2 - (c_1c_2 + c_1c_4 - c_2c_3 + c_3c_4)\xi_1\xi_2).
    \end{align*}
    This result must satisfy the following compatibility criteria:
    \begin{align*}
        \ell_1 + \ell_2 + \ell_3 + \ell_4 &= 0 \\
        c_1+c_2+c_3+c_4 &= 0 \\
        c_1c_2 + c_1c_4 - c_2c_3 + c_3c_4 &= \ell_2+\ell_4
    \end{align*}
    For instance, the collection of $\Pi$-invertible sheaves satisfying $\ell_1+\ell_3=\ell_2+\ell_4=0$ forms a surface $(c_1+c_3)^2 + 2c_2c_3 = 0$.
\end{example}

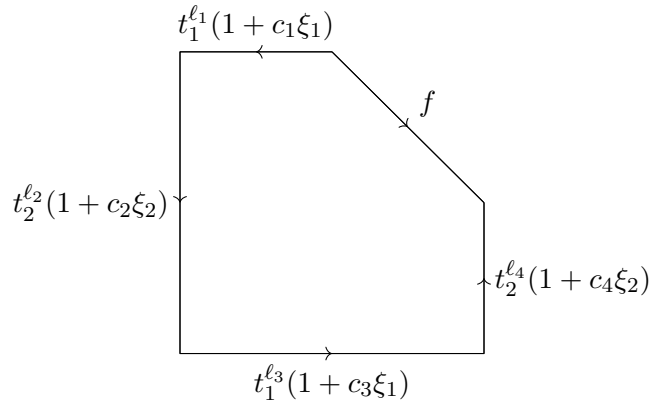
\begin{figure}
    \centering
    \begin{tikzpicture}[decoration={
    markings,
    mark=at position 0.5 with {\arrow{>}}}
    ] 
        \coordinate (A) at (0,0);
        \coordinate (B) at (4,0);
        \coordinate (C) at (4,2);
        \coordinate (D) at (2,4);
        \coordinate (E) at (0,4);
        
        \draw[black] (A) -- (B) -- (C) -- (D) -- (E) -- cycle;

        \draw[postaction={decorate}] (D) -- (E) node[midway, above] {\small $t_1^{\ell_1}(1+c_1\xi_1)$};

        \draw[postaction={decorate}] (E) -- (A) node[midway, left] {\small $t_2^{\ell_2}(1+c_2\xi_2)$};
        
        \draw[postaction={decorate}] (A) -- (B) node[midway, below] {\small $t_1^{\ell_3}(1+c_3\xi_1)$};

        \draw[postaction={decorate}] (B) -- (C) node[midway, right] {\small $t_2^{\ell_4}(1+c_4\xi_2)$};

        \draw[postaction={decorate}] (D) -- (C) node[midway, above right] {\small $f$};

    \end{tikzpicture}
    \caption{Transition functions of a $\Pi$-invertible sheaf along a pentagon}
    \label{fig:pentagonPicPi}
\end{figure}
\pagebreak

\bibliographystyle{hsiam}
\bibliography{refs}

\end{document}